\renewcommand\thefigure{\thesection.\@arabic\c@figure}
\renewcommand\thetable{\thesection.\@arabic\c@table}
\newtheorem{theorem}{Theorem}
\newtheorem{proposition}[theorem]{Proposition}
\newtheorem{cor}[theorem]{Corollary}
\newcommand{\bb}[1]{{\mathbb #1}}
\newcommand{\eps}{\varepsilon}
\def\R{\mathbb R}
\def\N{\mathbb N}
\def\N{\mathbb N}
\def\T{\bb T}
\def\A{A}
\def\S{\mathcal T}
\begin{document}

\author{Sunder Sethuraman and Shankar C. Venkataramani}

\address{\noindent Department of Mathematics, University of Arizona,
  Tucson, AZ  85721
\newline
e-mail:  \rm \texttt{sethuram@math.arizona.edu}
}

\address{\noindent Department of Mathematics, University of Arizona,
  Tucson, AZ  85721
\newline
e-mail:  \rm \texttt{shankar@math.arizona.edu}
}

\title[On superlinear preferential attachment]{On the growth of a superlinear preferential attachment scheme}

\begin{abstract}
We consider an evolving preferential attachment random graph model where at discrete times a new node is attached to an old node, selected with probability proportional to a superlinear function of its degree.  For such schemes, it is known that the graph evolution condenses, that is a.s. in the limit graph there will be a single random node with infinite degree, while all others have finite degree.

In this note, we establish a.s. law of large numbers type limits and fluctuation results, as $n\uparrow\infty$, for the counts of the number of nodes with degree $k\geq 1$ at time $n\geq 1$.  These limits rigorously verify and extend a physical picture of Krapivisky, Redner and Leyvraz (2000) on how the condensation arises with respect to the degree distribution.
 \end{abstract}

\subjclass[2010]{primary 60G20; secondary 05C80, 37H10}

\keywords{preferential attachment, random graphs, degree distribution, growth, fluctuations, superlinear}

\maketitle

\section{Model and Results}
There has been much recent interest in preferential attachment schemes which grow a random network by adding, progressively over time, new nodes to old ones based on their connectivity (cf. books and surveys \cite{Albert-Barabasi-02}, \cite{Cald}, \cite{Chung-Lu}, \cite{Durrettbook}, \cite{Mitzenmacher}, \cite{Newman10}).  Part of the interest is that such schemes, which might be seen as forms of reinforcement dynamics, have interesting and complex structure, relating to some `real world' networks.

Consider the following model.  At time $n=1$, an initial network $G_1$ is composed of two vertices connected by a single edge.  At times $n\geq 2$, a new vertex is attached to a vertex $x\in G_{n-1}$ with probability proportional to
$w(d_x(n))$, that is with chance $w(d_x(n-1))/\sum_{y\in G_{n-1}}w(d_y(n-1))$, where $d_z(j)$ is
the degree at time $j$ of vertex $z$ and $w=w(d): \N \rightarrow \R_+$ is a
`weight' function in the form $w(d) = d^\gamma$ for a fixed $\gamma>-\infty$ .  In this way, a random tree is grown.

Let now ${Z}_k(n)$ be the number of vertices in $G_n$ with degree $k$, that is
${Z}_k(n) = \sum_{y\in G_{n-1}} 1(d_y(n) = k)$.  Also, let $S(n) = \sum_{k=1}^nw(k)Z_k(n)$ be the total weight of $G_n$ and $V(n) = \max\{k\ | Z_k(n) \geq 1\}$ denote the largest degree among the vertices in $G_n$. By construction, $V(n) \leq n$.  Let also $\Phi_k(n) = \sum_{\ell\geq k} Z_\ell(n)$ be the count of vertices with degree at least $k$ in $G_n$.

We observe that the total number of vertices and edges in $G_n$ are $n+1$ and $n$ respectively, so that we have the `conservation laws'
 \begin{align}
 \label{conservation}
 \sum_{k= 1}^n Z_k(n)  & = n+1, \ \ {\rm and } \ \ 
 \sum_{k= 1}^n k Z_k(n)   = 2 n 
\end{align}
 
In \cite{KRL} (see also \cite{KR}), a trichotomy of growth behaviors was observed depending on the strength of the exponent $\gamma$.
\medskip

{\it Linear case.}  First, when $\gamma =1$, the scheme is often referred to as the `Barabasi-Albert' model.  Here, the degree structure satisfies, for $k\geq 1$,
 $$\lim_{n\rightarrow\infty} \frac{{Z}_k(n)}{n} \ = \ \frac{4}{k(k+1)(k+2)} \ \ {\rm a.s.}$$
This limit was first proved in mean-value in \cite{DM}, \cite{KRL}, in probability in \cite{BRST-01}, and almost-surely via different methods in \cite{AGS}, \cite{CS}, \cite{Mori-01}, \cite{RTV}.

\medskip

{\it Sublinear case.} Next, when $\gamma<1$,
it was shown that
\begin{equation*}
\lim_{n\uparrow\infty}\frac{{Z}_k(n)}{n} \ =\  q(k) \ \ \ {\rm a.s.}\end{equation*}
 Although
$q$ is not a
power law, it is in form where it decays faster than any polynomial
\cite{CSV}, \cite{RTV}: For $k\geq 1$,
\begin{equation*}
\label{stretched}
q(k) \ = \ \frac{\bar{s}}{k^\gamma}\prod_{j=1}^k \frac{j^\gamma}{\bar{s} + j^\gamma}, \ \
{\rm and \  }\bar{s} {\rm \ is \ determined\ by \ \ } 1\ = \ \sum_{k=1}^\infty \prod_{j=1}^k
\frac{j^\gamma}{\bar{s} + j^\gamma}.\end{equation*}
Asymptotically, as $k\uparrow\infty$, when $0<\gamma<1$, $\log q(k) \sim -(\bar{s}/(1-\gamma))k^{1-\gamma}$ is in `stretched exponential' form; when $\gamma<0$, 
$\log q(k)\sim \gamma k \log k$; when $\gamma=0$, the case of uniform attachment when an old vertex is
selected uniformly, $\bar{s}=1$ and $q$ is geometric: $q(k) = 2^{-k}$ for
$k\geq 1$.

\medskip
{\it Superlinear case.}
 Finally, when $\gamma>1$, `explosion' or a sort of
`condensation' happens in that in the limiting graph a random
single vertex dominates in accumulating connections.  Let $\A = \lfloor \gamma/(\gamma-1)\rfloor$ and
let $\S_j$ be the collection of rooted trees with $j$ or less nodes. 

In \cite{Oliveira},
the
limiting graph is shown to be a tree with the following structure a.s.
\begin{eqnarray}
\label{superlinear}
&&{\rm  There\  is\  a\  single\  (random)\  vertex\  } v \ {\rm with\  an\  infinite\
number\  of\  children.} \nonumber \\
&&{\rm To \ this \ vertex,} \ v \ {\rm \ infinite\ copies \ of }  \ \S \ {\rm are \ glued \  for \ each \ } \S\in \S_\A.\\
&&  {\rm The \ remaining \ nodes \ in \ the \ tree \ form \ a \ finite \ collection \ }\T\nonumber \\
&&{\rm of \ bounded \ (but \ arbitrary) \ degree \ vertices.} \nonumber
\end{eqnarray}
We remark, by the definition of $\S_\A$, in the limit tree, there are an infinite number of nodes of degree $k$ for $1\leq k\leq \A$.

One may ask how the `condensation' effect arises in the dynamics with respect to the degree distribution.   In \cite{KRL} (see also \cite{KR}), non-rigorous rate formulation derivations of the mean orders of growth of $\{Z_k(n)\}$ give that
\begin{eqnarray}
\label{KR_pic}
\lim_{n\uparrow \infty} \frac{1}{n^{k - (k-1)\gamma}}E[Z_k(n)] & = &a_k \ \ \ {\rm when \ }1\leq k< \frac{\gamma}{\gamma -1}
\nonumber\\  
\lim_{n\uparrow\infty}\frac{1}{\log(n)}E[Z_k(n)]&=& b_k \ \ \ {\rm when \ }2\leq k=\frac{\gamma}{\gamma -1} \ {\rm is \ an \ integer.}\nonumber\\
\lim_{n\uparrow\infty} E[\Phi_k(n)] &<&\infty \ \ \ {\rm when \ }k> \frac{\gamma}{\gamma -1}
\end{eqnarray}
where
\begin{equation}\label{a_k}
 a_k = \prod_{j=2}^k \frac{w(j-1)}{j-(j-1)\gamma}\ \  {\rm and \ } \ b_k = w(k-1)a_{k-1}.
 \end{equation}

 See Section 4.2 p.~92-94 in \cite{Durrettbook}, which discusses \cite{KRL}, and the roles that the ansatz  and the limit,
\begin{equation}
\label{durrett_question}
E\Big[\frac{Z_k(n)}{S(n)}\Big] \sim \frac{E[Z_k(n)]}{E[S(n)]} \ \ {\rm  and\ \ } \lim_{n\uparrow\infty}\frac{E[S(n)]}{n^\gamma} = 1,
\end{equation}
assumed in \cite{KRL}, play in the approximations used to obtain \eqref{KR_pic}.

We remark, however, in \cite{Athreya}, that an a.s. form of the limit for $k=1$, that is $\lim_{n\uparrow\infty}\frac{1}{n}Z_1(n) = 1$ a.s.,  was proved through branching process methods.  Also, we observe, from \eqref{superlinear}, that one can deduce that $\lim_{n\uparrow\infty}\Phi_k(n)<\infty$ a.s. for $k>\gamma/(\gamma-1)$.

The stratification of growth orders, or `connectivity transitions', appears to be a phenomenon in a class of superlinear attachment schemes.  See \cite{KR} and \cite{KRL} for more discussion.  In this context, we remark the work \cite{CHJ} considers a preferential attachment urn model, different from the above graph model, and gives stratification results of the growth of mean orders of the counts of variously sized urns, and of the maximum sized urn (cf.  Lem. 3.4, Thm. 3.5, Cor. 3.6 in \cite{CHJ}).

In this context, the purpose of this note is to give, for the graph superlinear model, when $\gamma>1$, a rigorous, self-contained derivation of a.s. versions of \eqref{KR_pic} in Theorem \ref{degree_thm}, and to elaborate a more general asymptotic picture, including lower order terms and fluctuations, in Theorem \ref{mainthm2}.

 These derivations are based on rate equations and martingale analysis, with respect to the dynamics of the counts.  We were inspired by the interesting works \cite{CHJ}, \cite{KRL} and \cite{Oliveira}.  Although our arguments follow part of the outline in \cite{CHJ}, they do not make use of \eqref{superlinear} or its branching process/tree constructions in \cite{Oliveira}, or the more combinatorial estimates in \cite{CHJ}.  
     We also remark that the methods given here seem robust and may be of use in other superlinear preferential schemes, beyond the `standard' graph model that we have concentrated upon.

In the next section, we present our results and give their proofs in Section \ref{proofs}.
\subsection{Results}

Recall the formulas for $\{a_k\}$ and $\{b_k\}$ in \eqref{a_k}.

\begin{theorem}
\label{degree_thm}  The following structure for the degree sequence holds.

\noindent (1) We have
\begin{itemize}
\item[(a)] When $\frac{\gamma}{\gamma -1}>k\geq 1$,
$$\lim_{n\uparrow\infty}  \frac{1}{n^{k - (k-1)\gamma}}Z_k(n) \ = \ a_k \ \ {\rm a.s.}.$$
\item[(b)] When $k=\frac{\gamma}{\gamma -1}\geq 2$ is an integer, 
$$\lim_{n\uparrow\infty} \frac{1}{\log(n)} Z_k(n) \ = \ b_k \ \ {\rm a.s.}.$$
\item[(c)]
When $k>\frac{\gamma}{\gamma -1}$,
$$\lim_{n\uparrow\infty} \Phi_k(n) \ <\ \infty \ \ {\rm a.s.}$$
\end{itemize}
\noindent(2) Also, a.s., for $n\geq n_0$, where is $n_0$ is a random time, $V(n)$ is achieved at a fixed (random) vertex, and
$$\lim_{n\uparrow \infty} \frac{1}{n}V(n) = \ 1.$$ 
Moreover, all other vertices, in the process $\{G_n\}$, are of bounded degree.

\end{theorem}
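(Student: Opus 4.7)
The plan is to combine rate equations with martingale analysis and induct on $k$. Using Doob's decomposition, write
\begin{equation*}
Z_k(n) = Z_k(0) + A_k(n) + M_k(n),\qquad A_k(n) = \sum_{j=1}^n E\!\left[Z_k(j)-Z_k(j-1)\,\big|\,\F_{j-1}\right],
\end{equation*}
where $M_k$ is a martingale with increments bounded by $2$, so Azuma--Hoeffding and Borel--Cantelli give $M_k(n)=O(\sqrt{n\log n})$ a.s. The conditional drifts read $E[\Delta Z_1(j)\,|\,\F_{j-1}]=1-Z_1(j-1)/S(j-1)$ and, for $k\ge 2$, $E[\Delta Z_k(j)\,|\,\F_{j-1}]=\bigl[w(k-1)Z_{k-1}(j-1)-w(k)Z_k(j-1)\bigr]/S(j-1).$ The conservation laws in \eqref{conservation} combined with H\"older supply the a priori sandwich $(2n)^\gamma/(n+1)^{\gamma-1}\le S(n)\le 2n\,V(n)^{\gamma-1}$.

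The decisive step is to prove $V(n)/n\to 1$ a.s., fixation of $V(n)$ at a single random vertex $v^*$ for $n\ge n_0$, and consequently $S(n)/n^\gamma\to 1$ a.s. The strategy is a bootstrap: crude moment estimates first force $V(n)\to\infty$; once some vertex has high degree, its conditional probability of being re-selected is at least $V(n)^\gamma/S(n)$, which the a priori bounds force to stay well away from zero. A two-vertex comparison argument (tracking the gap between the current leader and its closest competitor through a supermartingale driven by the super-linear feedback $w(d)=d^\gamma$, $\gamma>1$) then shows that almost surely only one vertex can escape any prescribed degree threshold infinitely often; call this vertex $v^*$. Since all other vertices have bounded degree, the conservation identity $\sum_k kZ_k(n)=2n$ forces $d_{v^*}(n)/n\to 1$, hence $S(n)=d_{v^*}(n)^\gamma+o(n^\gamma)\sim n^\gamma$. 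Carrying out this bootstrap without the continuous-time/branching embedding of \cite{Oliveira} is the main technical obstacle.

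Granted $S(n)/n^\gamma\to 1$, parts (1) and (2) follow by induction on $k$. For (1)(a) at $k=1$: $A_1(n)=n-\sum_{j=1}^n Z_1(j-1)/S(j-1)$, and $Z_1\le n+1$ with $S\sim n^\gamma$ gives $\sum_j Z_1(j-1)/S(j-1)=O(n^{2-\gamma}\vee\log n)=o(n)$, so $Z_1(n)/n\to 1$. For $2\le k<\gamma/(\gamma-1)$, substitute the inductive hypothesis $Z_{k-1}(j)\sim a_{k-1}j^{k-1-(k-2)\gamma}$ and $S(j)\sim j^\gamma$ into $A_k(n)$; a Toeplitz-type summation yields
\begin{equation*}
A_k(n)\sim w(k-1)a_{k-1}\sum_{j=1}^n j^{k-1-(k-1)\gamma}\sim\frac{w(k-1)a_{k-1}}{k-(k-1)\gamma}\,n^{k-(k-1)\gamma}=a_k\,n^{k-(k-1)\gamma},
\end{equation*}
with the $w(k)Z_k/S$ contribution verified to be lower order by induction and $M_k$ absorbed into the error. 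Case (1)(b), $k=\gamma/(\gamma-1)\in\N$, is the same computation with a zero exponent, producing $\log n$ growth with prefactor $b_k=w(k-1)a_{k-1}$. For (1)(c), use the clean telescoping $\Phi_k(n)-\Phi_k(n-1)=\mathbbm{1}(\text{pick deg-}(k-1)\text{ at step }n)$: the just-established asymptotics give $\sum_j w(k-1)Z_{k-1}(j-1)/S(j-1)<\infty$ a.s., so a conditional Borel--Cantelli argument yields $\Phi_k(n)<\infty$ a.s. Part (2) is then complete: fixation at $v^*$ and $V(n)/n\to1$ come from the bootstrap, and the bounded-degree statement for non-leader vertices follows because $\Phi_k(n)<\infty$ a.s.\ for every $k>\gamma/(\gamma-1)$, so only $v^*$ can exceed this threshold infinitely often.
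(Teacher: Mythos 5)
Your outline leaves the decisive step unproven and, as sketched, it is circular. You acknowledge that fixation of a single leader, $V(n)/n\to 1$ and $S(n)/n^{\gamma}\to 1$ is ``the main technical obstacle,'' but the route you indicate does not close: from your a priori bound $S(n)\le 2nV(n)^{\gamma-1}$ the re-selection probability of the leader is only $\ge V(n)/(2n)$, so asserting it ``stays well away from zero'' presupposes $V(n)\gtrsim n$, which is exactly what is to be proved; and the inference ``all other vertices have bounded degree $+$ $\sum_k kZ_k(n)=2n$ $\Rightarrow$ $d_{v^*}(n)/n\to 1$'' is false as stated (e.g.\ order $n$ vertices of degree $2$ are all bounded yet would leave $d_{v^*}(n)\approx n/2$); one needs $\Phi_2(n)=o(n)$, i.e.\ $Z_1(n)/n\to1$, \emph{before} this step, whereas your plan derives $Z_1(n)/n\to 1$ only after assuming $S(n)\sim n^{\gamma}$. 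The paper breaks this circularity with an intermediate ladder: a log-scale martingale analysis of $\Delta(n)=\sum_{j<V(n)}j^{-1}$ gives the polynomial lower bound $V(n)\gtrsim n^{1-\epsilon}$ (Proposition \ref{max-degree}); this weaker input $S(j)\gtrsim j^{(1-\epsilon)\gamma}$ already yields $Z_1(n)/n\to1$ and growth bounds on $\Phi_k$ (Proposition \ref{Z_Phi_asymptotics}); fixation of a single diverging vertex is then obtained by embedding the finitely many candidate vertices in independent continuous-time weighted urns with a.s.\ distinct explosion times (Proposition \ref{frozen}); only then do the conservation laws give $V(n)/n\to1$ and $S(n)/n^{\gamma}\to 1$ (Proposition \ref{S_asymptotics}). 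Your proposed ``two-vertex comparison supermartingale'' replacement is not constructed, so the core of the theorem is missing.

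There is a second, quantitative gap: controlling the martingale terms by Azuma--Hoeffding, $M_k(n)=O(\sqrt{n\log n})$, is not enough. For the induction in part (1)(a) you need $M_k(n)=o(n^{k-(k-1)\gamma})$, and the exponent $k-(k-1)\gamma$ can be arbitrarily small (e.g.\ $\gamma=1.24$, $k=5$ gives exponent $0.04$), so $\sqrt{n\log n}$ is far too large; in the logarithmic case (1)(b) you would need $M_k(n)=o(\log n)$, which no worst-case bound over $n$ bounded increments can deliver. The paper instead bounds the \emph{predictable quadratic variation}, $\langle M_k\rangle(n)\lesssim n^{k-(k-1)\gamma}$ (respectively $O(\log n)$), and invokes Fisher's law of the iterated logarithm \eqref{fisher}, supplemented by the bounded-difference martingale dichotomy \eqref{bounded_diff} when the variation stays bounded; you would need this (or an equivalent variance-sensitive bound) for your induction to go through. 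Your treatment of (1)(c) by conditional Borel--Cantelli is fine, and your drift computations and the $k=1$ case match the paper, but the two gaps above are essential.
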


The previous theorem represents a different, more quantitative, but also less descriptive, in terms of connectivity, version of the quite detailed structure in \eqref{superlinear}.

We now elaborate further upon these growth orders.

\begin{theorem} 
\label{mainthm2}
 Suppose
$\frac{\gamma}{\gamma -1}>k \geq 2$, and let $k^* = \lfloor \frac{\gamma}{2(\gamma-1)} + \frac{k}{2}\rfloor$ denote the largest integer such that $k^* - (k^*-1)\gamma \geq \frac{1}{2}[k - (k-1) \gamma]$.  Then,  with respect to coefficients $c^k_k=a_k, c^k_{k+1}, \ldots, c^k_{k^*}$, which can be found from equations \eqref{c_decomp} (see also section \ref{sec:algebra}), we have
$$
\frac{1}{n^{(k - (k-1)\gamma)/2}} \Big\{Z_k(n) - c^k_k n^{k+\gamma - k\gamma} -\cdots - c^k_{k^*} n^{k^* - (k^*-1)\gamma}\Big\} \Rightarrow N(0, a_k).
$$
Also,  when $k=\frac{\gamma}{\gamma -1}\geq 2$ is an integer, we have, for $b_k = w(k-1) a_{k-1}$, 
$$\frac{1}{(\log(n))^{1/2}} \Big\{Z_k(n) - b_k\log(n)\Big\} \Rightarrow N(0, b_k).$$
In addition, when $\frac{\gamma}{\gamma -1}<2$, we have
$$
\lim_{n \uparrow \infty} \Big(Z_1(n) - n \Big)  <  \infty \ \ \mathrm{a.s.}\ \ \mathrm{ and } \\
\lim_{n \uparrow \infty} \Big(V(n) - n \Big)  <  \infty \ \ \mathrm{a.s.}
$$
\end{theorem}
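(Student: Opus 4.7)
The plan is to express each $Z_k(n)$ through its Doob decomposition and iteratively refine a polynomial expansion in $k$. For $k\geq 2$, the increment $\Delta Z_k(n) := Z_k(n)-Z_k(n-1)\in\{-1,0,1\}$ has
\begin{equation*}
\P\bigl(\Delta Z_k(n)=+1\mid\mathcal F_{n-1}\bigr) = \frac{w(k-1)Z_{k-1}(n-1)}{S(n-1)},\qquad \P\bigl(\Delta Z_k(n)=-1\mid\mathcal F_{n-1}\bigr) = \frac{w(k)Z_k(n-1)}{S(n-1)}.
\end{equation*}
Writing $Z_k(n) = Z_k(0) + A_k(n) + M_k(n)$ with $A_k$ the predictable compensator and $M_k$ a martingale, I would first use Theorem \ref{degree_thm}(2) (which gives $V(n)/n\to 1$ a.s.) to deduce $S(n)/n^\gamma\to 1$ a.s., since the dominant vertex contributes weight $V(n)^\gamma\sim n^\gamma$ while every other vertex contributes strictly lower order.

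Setting $\alpha_m := m - (m-1)\gamma = \gamma - m(\gamma-1)$, the algebraic identity $\alpha_{m-1} - \gamma = \alpha_m - 1$ drives the expansion. Proceeding by induction on $k$, I would assume
\begin{equation*}
Z_{k-1}(n) = \sum_{m=k-1}^{(k-1)^*}c^{k-1}_m\,n^{\alpha_m} + O_{\P}\!\bigl(n^{\alpha_{k-1}/2}\bigr),
\end{equation*}
substitute into $D_k(j) := E[\Delta Z_k(j)\mid\mathcal F_{j-1}]$, replace $S(j)$ by $j^\gamma(1+o(1))$, and sum termwise. This should yield an expansion of $A_k(n)$ whose coefficients satisfy $\alpha_m c^k_m = w(k-1)c^{k-1}_{m-1} - w(k)c^k_{m-1}$ with $c^k_k=a_k$ (the system \eqref{c_decomp}). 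The cutoff $k^*$ is pinned down by $\alpha_{k^*}\geq \alpha_k/2>\alpha_{k^*+1}$, so that precisely the polynomial terms dominating the Gaussian scale $n^{\alpha_k/2}$ are retained; an Abel-summation argument should absorb both the $o(j^\gamma)$ correction in $S(j)$ and the inductive remainder, giving $A_k(n) = \sum_{m=k}^{k^*}c^k_m n^{\alpha_m} + o_\P(n^{\alpha_k/2})$.

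For the Gaussian limit, I would apply the classical martingale central limit theorem to $M_k(n)/n^{\alpha_k/2}$: jumps are bounded by $2$ so Lindeberg is immediate, while the predictable quadratic variation satisfies
\begin{equation*}
\langle M_k\rangle_n = \sum_{j=1}^n\bigl[p^+_k(j)+p^-_k(j)-D_k(j)^2\bigr] \sim \sum_{j=1}^n w(k-1)a_{k-1}\,j^{\alpha_k-1} \sim a_k\,n^{\alpha_k},
\end{equation*}
with $p^-_k$ and $D_k^2$ being strictly smaller order. Combined with the expansion of $A_k$, this yields the first CLT. The boundary case $k=\gamma/(\gamma-1)\in\Z$ follows from the same computation with $\alpha_k=0$: drift and predictable variation both grow like $b_k\log n$, producing the $\sqrt{\log n}$ scaling. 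When $\gamma>2$ (so $\gamma/(\gamma-1)<2$), the selection probability of a degree-one vertex at step $j$ is $Z_1(j-1)/S(j-1) = O(j^{1-\gamma})$, which is summable; Borel--Cantelli then forces $n - Z_1(n)$ to stabilize a.s. For $V$, the conservation law $\sum_k kZ_k(n)=2n$ together with Theorem \ref{degree_thm}(1)(c),(2) (which for $\gamma>2$ bounds $\Phi_k(n)$ for every $k\geq 2$ and fixes $V$ at a single vertex eventually) implies that the total degree carried by vertices other than the dominant one stabilizes, hence $n - V(n)$ converges a.s.

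The hard part will be the inductive bookkeeping of errors: carrying the $O_\P(n^{\alpha_{k-1}/2})$ remainder through the substitution into $D_k(j)$ and through the subsequent summation, while simultaneously accounting for the $o(j^\gamma)$ error in $S(j)$, and verifying that no aggregated error exceeds the fluctuation scale $n^{\alpha_k/2}$. This is precisely what forces the choice of $k^*$ at the breakpoint $\alpha_{k^*}\approx \alpha_k/2$, and is what separates this refined Gaussian statement from the coarser law of large numbers of Theorem \ref{degree_thm}.
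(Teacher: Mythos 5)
Your overall architecture (Doob decomposition, induction in $k$, martingale CLT for $M_k$ with the quadratic variation $\langle M_k\rangle(n)\sim a_k n^{\alpha_k}$, the easier $\log$-scale computation at $k=\gamma/(\gamma-1)$, and the conservation-law argument for $V(n)-n$ when $\gamma>2$) is the same as the paper's; the Borel--Cantelli route for $Z_1(n)-n$ is a harmless variant of the paper's bounded-difference-martingale convergence argument, provided you use the conditional (extended) Borel--Cantelli lemma, since the selection probabilities $w(1)Z_1(j)/S(j)$ are only summable a.s.\ via the random bound $S(j)\gtrsim j^{(1-\epsilon)\gamma}$.

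However, there is a genuine gap in the step that produces the centering: you cannot ``replace $S(j)$ by $j^\gamma(1+o(1))$'' and sum termwise, and consequently your recursion $\alpha_m c^k_m = w(k-1)c^{k-1}_{m-1}-w(k)c^k_{m-1}$ is not the system \eqref{c_decomp}. The correction to $S(j)$ beyond $j^\gamma$ is of relative size of order $j^{1-\gamma}$ (since $S(j)=V(j)^\gamma+\sum_{\ell\le A}w(\ell)Z_\ell(j)+O(1)$ with $V(j)=j-\sum_{\ell\ge 2}(\ell-1)Z_\ell(j)+O(1)$, both $V(j)^\gamma-j^\gamma$ and $Z_1(j)$ are of order $j$). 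Feeding this into the drift sum produces a deterministic contribution of order $j^{\alpha_{k-1}-\gamma}\cdot j^{1-\gamma}=j^{\alpha_{k+1}-1}$ per step, hence of order $n^{\alpha_{k+1}}$ in total, and $\alpha_{k+1}\ge \alpha_k/2$ exactly when $k^*>k$, i.e.\ exactly when the theorem requires subtracting sub-leading terms. So the $S(j)$ correction cannot be ``absorbed'' by Abel summation: it shifts every coefficient $c^k_m$ with $m>k$ by a nonzero amount, and with your coefficients the normalized expression would differ from the paper's by a divergent quantity of order $n^{\alpha_{k+1}-\alpha_k/2}$, destroying the claimed convergence to $N(0,a_k)$. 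This is why the paper defines the coefficients through \eqref{c_decomp} with the full denominator $T(j)=(j-\sum_{\ell\ge2}(\ell-1)Y_\ell(j))^\gamma+\sum_{\ell}w(\ell)Y_\ell(j)$ obtained from the conservation laws, and devotes the body of the proof to checking that the martingale terms generated by these substitutions (e.g.\ those coming from $M_2$ inside $S(j)$ and from $M_{k-1}$ in the numerator) are all $o(n^{\alpha_k/2})$; the worked example in Section \ref{sec:algebra} ($\gamma=5/4$, $k=2$) shows explicitly that the sub-leading term of the denominator expansion enters $c^2_3$ at order $n^{1/2}$. Your proposal as written collapses this entire mechanism into the unproved assertion that the $o(j^\gamma)$ error is negligible, which is false at the precision the statement demands; the leading-order CLT (the case $k^*=k$) is the only case your argument covers.
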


We remark that the last claim, when $\frac{\gamma}{\gamma -1}<2$, also follows from the description \eqref{superlinear}, although we will give here a different proof.

\begin{cor} 
\label{mainthm3}
Suppose $\frac{\gamma}{\gamma -1} > 2$ and recall $2^* = \lfloor \frac{\gamma}{2(\gamma-1)} \rfloor+1$. With respect to coefficients $c^{1}_2,c^1_{3}, \ldots, c^1_{2^*}$, and $m_2,m_3, \ldots,m_{2^*}$  given by equations \eqref{b_decomp} and coefficients $c^2_2=a_2, c^2_{3}, \ldots, c^2_{2^*}$, obtained from equations \eqref{c_decomp}, we have
$$
 \frac{1}{n^{1-\gamma/2}} \begin{Bmatrix} \begin{pmatrix}Z_1(n) - n \\ -Z_2(n) \\ V(n)-n \end{pmatrix} +\displaystyle{ \sum_{\ell=2}^{2^*}} \begin{pmatrix} c^1_\ell \\ c^2_{\ell} \\ m_{\ell}\end{pmatrix}  n^{\ell - (\ell-1)\gamma} 
\end{Bmatrix} \Rightarrow N(\mathbf{0}, a_2 \mathbbm{1}) 
$$
where $\mathbbm{1}$ is the $3 \times 3$ matrix with all the entries equal to 1.

Also, when $\frac{\gamma}{\gamma-1} =2$ (that is $\gamma =2$), we have
$$
 \frac{1}{\sqrt{\log(n)}} \begin{Bmatrix} \begin{pmatrix}Z_1(n) - n \\ -Z_2(n) \\ V(n)-n \end{pmatrix} +  \begin{pmatrix} b_2 \\ b_2 \\ b_2\end{pmatrix}  \log(n) 
\end{Bmatrix} \Rightarrow N(\mathbf{0}, b_2 \mathbbm{1}). 
$$

\end{cor}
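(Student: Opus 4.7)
\medskip

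\noindent\textbf{Proof plan.} The plan is to express $Z_1(n) - n$ and $V(n) - n$ as $-Z_2(n)$ plus a deterministic sum of higher-degree counts, then to expand each such $Z_k(n)$ via Theorem \ref{mainthm2} until only the fluctuation of $Z_2(n)$ remains at leading order. By Theorem \ref{degree_thm}(2), there is an a.s.\ finite random time $n_0$ such that for $n \geq n_0$ the maximum degree $V(n)$ is achieved at a unique vertex, so $Z_{V(n)}(n) = 1$ and $Z_k(n) = 0$ for $k > V(n)$. Writing the conservation laws \eqref{conservation} as
\begin{align*}
Z_1(n) + Z_2(n) + \sum_{k=3}^{V(n)-1} Z_k(n) + 1 &= n + 1, \\
Z_1(n) + 2 Z_2(n) + \sum_{k=3}^{V(n)-1} k\, Z_k(n) + V(n) &= 2n,
\end{align*}
and eliminating $Z_1$ from the second using the first, one obtains, for $n \geq n_0$, the \emph{exact} identities
\begin{align*}
Z_1(n) - n &= -Z_2(n) - \sum_{k=3}^{V(n)-1} Z_k(n), \\
V(n) - n &= -Z_2(n) - \sum_{k=3}^{V(n)-1} (k-1)\, Z_k(n).
\end{align*}

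\medskip

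\noindent\textbf{Expansion and bookkeeping.} For $\gamma/(\gamma-1) > 2$, apply Theorem \ref{mainthm2} to $Z_k(n)$ for each $3 \leq k \leq 2^*$, writing
$$Z_k(n) = \sum_{\ell=k}^{k^{*}} c^k_\ell\, n^{\ell-(\ell-1)\gamma} + F_k(n), \qquad F_k(n) = O_p\!\left(n^{(k-(k-1)\gamma)/2}\right),$$
with $k^{*} = \lfloor \gamma/(2(\gamma-1)) + k/2\rfloor$. The remaining contributions to the sums in the identities are negligible at scale $n^{1-\gamma/2}$: for $2^* < k < \gamma/(\gamma-1)$, the definition of $2^*$ gives $Z_k(n) = O(n^{k-(k-1)\gamma}) = o(n^{1-\gamma/2})$ via Theorem \ref{degree_thm}(1)(a), while Theorem \ref{degree_thm}(1)(c) yields $\sum_{k > \gamma/(\gamma-1)} Z_k(n) = O(1)$ a.s. Moreover, for every $k \geq 3$ one has $(k - (k-1)\gamma)/2 < (2 - \gamma)/2$ (as $\gamma > 1$), so $F_k(n) = o_p(n^{1-\gamma/2})$; and any deterministic term $n^{\ell - (\ell-1)\gamma}$ with $\ell > 2^*$ is already $o(n^{1-\gamma/2})$ by the definition of $2^*$. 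Collecting deterministic contributions by exponent $\ell$ and setting
$$c^1_\ell = c^2_\ell + \sum_{k=3}^{\min(\ell, 2^*)} c^k_\ell, \qquad m_\ell = c^2_\ell + \sum_{k=3}^{\min(\ell, 2^*)} (k-1)\, c^k_\ell, \qquad \ell = 2, \ldots, 2^*,$$
reduces each of $Z_1(n) - n + \sum_\ell c^1_\ell n^{\ell-(\ell-1)\gamma}$, $-Z_2(n) + \sum_\ell c^2_\ell n^{\ell-(\ell-1)\gamma}$, and $V(n) - n + \sum_\ell m_\ell n^{\ell-(\ell-1)\gamma}$ to $-F_2(n) + o_p(n^{1-\gamma/2})$, where $F_2(n)/n^{1-\gamma/2} \Rightarrow N(0, a_2)$ by Theorem \ref{mainthm2}.

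\medskip

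\noindent\textbf{Joint convergence and the critical case.} Since all three corrected quantities agree with $-F_2(n)$ up to $o_p(n^{1-\gamma/2})$, after division by $n^{1-\gamma/2}$ the rescaled triple converges jointly in distribution to the degenerate vector $(Y,Y,Y)$ with $Y \sim N(0, a_2)$, whose law is exactly $N(\mathbf{0}, a_2 \mathbbm{1})$. The critical case $\gamma = 2$ is treated analogously but more simply: now $\gamma/(\gamma-1) = 2$, so Theorem \ref{degree_thm}(1)(c) gives $\sum_{k \geq 3} Z_k(n) = O(1)$ a.s.\ and the Step 1 identities collapse to $Z_1(n) - n = -Z_2(n) + O(1)$ and $V(n) - n = -Z_2(n) + O(1)$; the logarithmic CLT for $Z_2(n)$ from Theorem \ref{mainthm2} then transfers directly to yield the joint $N(\mathbf{0}, b_2 \mathbbm{1})$ limit. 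I expect the main obstacle to be the bookkeeping in the expansion step: organizing the double sum $\sum_k \sum_\ell c^k_\ell n^{\ell-(\ell-1)\gamma}$ by exponent, verifying that the cutoff $\ell = 2^*$ captures precisely those deterministic terms of order at least $n^{1-\gamma/2}$, and confirming that the residual coming from the higher $Z_k$ fluctuations and the a.s.\ bounded tail $\sum_{k > \gamma/(\gamma-1)} Z_k(n)$ is genuinely $o_p(n^{1-\gamma/2})$.
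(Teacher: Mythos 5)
Your proposal is correct and follows essentially the same route as the paper: both use the two conservation laws, the uniqueness of the maximum-degree vertex, and the negligibility of higher-degree counts to reduce $Z_1(n)-n$ and $V(n)-n$ to $-Z_2(n)$ plus deterministic expansions, so that the $Z_2$ fluctuation from Theorem \ref{mainthm2} drives all three components and yields the rank-one covariance $a_2\mathbbm{1}$ (resp. $b_2\mathbbm{1}$ when $\gamma=2$). The only cosmetic difference is that you work with the remainders $F_k(n)$ from Theorem \ref{mainthm2} as $O_p$ quantities, while the paper phrases the same cancellation through the martingale $M_2(n)$ and the a.s. limits \eqref{b_decomp}.
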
 

We remark that the weak convergence statements in Theorem~\ref{mainthm2}~and~Corollary~\ref{mainthm3} appear to be the first fluctuation results for nonlinear preferential attachment schemes.  We note, however, for the linear preferential model, a central limit theorem for the counts has been shown in \cite{Mori-01}. 

Corollary~\ref{mainthm3} shows that the  fluctuations for $Z_1,Z_2$ and $V$ are of the same order, and (in the limit) perfectly either correlated or anti-correlated. As seen in the proof, this `leading order balance' is a reflection of the structure of the two linear, deterministic, `constraints' \eqref{conservation} that are satisfied by the random counts $Z_k(n)$ in every realization.

\section{Proofs}
\label{proofs}
The proof section is organized as follows.  After development of a martingale framework in section~\ref{sec:martingale}, and combination of some stochastic analytic estimates, we prove Theorem \ref{degree_thm} at the end of section~\ref{sec:coarse}. Also, we prove Theorem \ref{mainthm2} and Corollary \ref{mainthm3}  in section~\ref{sec:fine}.

\subsection{Martingale decompositions}
\label{sec:martingale}
Define, for $j\geq 1$, the increment $d_k(j+1) = Z_k(j+1) - Z_k(j)$.   Given ${\mathcal F_j} = \sigma\big\{G_1, \ldots, G_j\big\}$, for $k\geq 2$, $d_{k}(j+1)$ has distribution
$$
d_k(j+1) \ = \ \left\{\begin{array}{rl}
1 & \ {\rm with \ prob.} \ \frac{w(k-1)Z_{k-1}(j)}{S(j)}\\
-1& \ {\rm with \ prob.} \ \frac{w(k)Z_k(j)}{S(j)}\\
0& \ {\rm with \  prob.}\ 1- \frac{w(k-1)Z_{k-1}(j)}{S(j)}- \frac{w(k)Z_k(j)}{S(j)}
\end{array}\right.
$$
When $k=1$,
$$d_1(j+1) \ = \ \left\{\begin{array}{rl}
1& \ {\rm with \ prob. } \ 1-\frac{w(1)Z_1(j)}{S(j)}\\
0& \ {\rm with \ prob.} \frac{w(1)Z_1(j)}{S(j)}.
\end{array}\right.
$$

Recall the count of the number of vertices with degree at least a certain level $k\geq 1$, 
$$\Phi_{k}(j) \ =\  \sum_{\ell\geq k}Z_\ell(j).$$ 
Let $e_{k}(j+1) = \Phi_{k}(j+1) - \Phi_{k}(j)$ be the corresponding increment.  Conditional on ${\mathcal F_j}$, for $k\geq 2$,
$$e_{k}(j+1) \ = \ \left\{\begin{array}{rl}
1 & \ {\rm with \ prob.} \ \frac{w(k-1)Z_{k-1}(j)}{S(j)}\\
0& \ {\rm with \ prob.} \ 1-\frac{w(k-1)Z_{k-1}(j)}{S(j)}\end{array}\right.$$

We note $\Phi_k(j)$ is an increasing function of $j$. Of course, $\Phi_1(j) = j+1$ as the number of vertices in $G_j$ equals $j+1$. 
The formula, for $k\geq 1$,
$$Z_k(n) \ = \ \Phi_k(n) - \Phi_{k+1}(n),$$
 relates $\{Z_k(n)\}$ to $\{\Phi_k(n)\}$.
 
 Finally, recall $V(j) = \max\{k | Z_k(j) \geq 1\}$ is the maximum over the degrees of all the vertices in the graph $G_n$.  Define its increment $h(j+1) = V(j+1)-V(j)$. 
 Conditional on ${\mathcal F}_j$,
$$h(j+1) \ = \ \left\{\begin{array}{rl}
1 & \ {\rm with \ prob.} \ \frac{w(V(j))Z_{V(j)}(j)}{S(j)}\\
0& \ {\rm with \ prob.} \ 1-\frac{w(V(j))Z_{V(j)}(j)}{S(j)}\end{array}\right.$$

Define the mean-zero martingales, with respect to $\{{\mathcal F_j}\}$,
\begin{eqnarray*}
M_k(n) &=& \sum_{j=1}^{n-1} d_k(j+1) - E\big[d_k(j+1)|{\mathcal F_j}\big]\\
Q_{k}(n) & = & \sum_{j=1}^{n-1}e_{k}(j+1) - E\big[e_{k}(j+1)|{\mathcal F_j}\big] \\
R(n) & = & \sum_{j=1}^{n-1}h(j+1) - E\big[h(j+1)|{\mathcal F_j}\big].
\end{eqnarray*}

The conditional expectations are computed as follows.
$$
 E\big[d_k(j+1)|{\mathcal F_j}\big] \ = \ \left\{\begin{array}{rl}
\frac{w(k-1)Z_{k-1}(j)}{S(j)} - \frac{w(k)Z_{k}(j)}{S(j)} & \ {\rm for \ }k\geq 2\\
1 - \frac{w(1)Z_1(j)}{S(j)} & \ {\rm for \ }k=1.
\end{array}\right.
$$
Also, for $k\geq 2$,
$$ E\big[e_k(j+1)|{\mathcal F_j}\big] \ = \ 
\frac{w(k-1)Z_{k-1}(j)}{S(j)}.$$
This immediately yields
$$ E\big[h(j+1)|{\mathcal F_j}\big] \ = \ 
\frac{w(V(j))Z_{V(j)}(j)}{S(j)}.$$

\medskip
Putting together terms, we have the following martingale decompositions:  
For $k=1$, 
\begin{equation}
\label{mart_decomp}
Z_1(n) - Z_1(1) \ = \ \sum_{j=1}^{n-1} \Big(1 - \frac{w(1)Z_1(j)}{S(j)}\Big) + M_1(n)
\end{equation}
and
for $k\geq 2$,
\begin{align*}
Z_k(n) - Z_k(1) & =  \sum_{j=1}^{n-1} E\big[d_k(j+1)|{\mathcal F_j}\big] + M_k(n)  \\
&= \sum_{j=1}^{n-1} \Big(\frac{w(k-1)Z_{k-1}(j)}{S(j)} - \frac{w(k)Z_k(j)}{S(j)}\Big) + M_k(n).\nonumber 
\end{align*} 
Also, for $k\geq 2$,
\begin{align*}
\Phi_{k}(n) - \Phi_{k}(1) 
&= \sum_{j=1}^{n-1} \frac{w(k-1)Z_{k-1}(j)}{S(j)} + Q_k(n). 
\end{align*}
Finally, for the maximum degree $V(n)$,
\begin{equation*}
V(n)-V(1)  \ = \ \sum_{j=1}^{n-1} \frac{w(V(j))Z_{V(j)}(j)}{S(j)} + R(n)
\end{equation*}
In the above equations, since we start from initial graph $G_1$, composed of two vertices attached by an edge, $Z_1(1) = 2$, $Z_k(1)=0$ for $k\geq 2$, and $V(1) = 1$.  
Also, $\Phi_k(1) =0$ for $k\geq 2$.

We will want to estimate the order of the martingales.  Their (predictable) quadratic variations can be computed:
\begin{eqnarray*}
\big\langle M_k\big\rangle(n) &= & \sum_{j=1}^{n-1} E\Big[\big(d_k(j+1) - E[d_k(j+1)|{\mathcal F_j}]\big)^2|\mathcal{F}_j\Big]\\
\big\langle Q_{k}\big\rangle(n)  &= & \sum_{j=1}^{n-1} E\Big[\big(e_{k}(j+1) - E[e_{k}(j+1)|{\mathcal F_j}]\big)^2|\mathcal{F}_j\Big] \\
\big\langle R\big\rangle(n)  &= & \sum_{j=1}^{n-1} E\Big[\big(h(j+1) - E[h(j+1)|{\mathcal F_j}]\big)^2|\mathcal{F}_j\Big].
\end{eqnarray*}
These martingales have bounded increments $|d_k|, |e_k|, |h|
 \leq 1$, and so 
 $
\langle M_k\rangle(n)$, $\langle Q_{k}\rangle(n)$, $\langle R\rangle(n)  
\leq  n$.
However, we will need better estimates in the sequel.  We have, when $k=1$,
\begin{equation}
\label{quad_comp}
E\big[\big(d_1(j+1) - E[d_1(j+1)|{\mathcal F_j}]\big)^2|\mathcal{F}_j\big] =
 \frac{w(1)Z_1(j)}{S(j)} \ \Big(1 - \frac{w(1)Z_1(j)}{S(j)}\Big) 
\end{equation}
and, when $k\geq 2$, that
\begin{eqnarray*}
&&E\big[\big(d_k(j+1) - E[d_k(j+1)|{\mathcal F_j}]\big)^2|\mathcal{F}_j \big]\\
&&\  = \Big(1 - \Big(\frac{w(k-1)Z_{k-1}(j)}{S(j)} - \frac{w(k)Z_{k}(j)}{S(j)}\Big)\Big)^2\frac{w(k-1)Z_{k-1}(j)}{S(j)}\\
&&\  \ + \Big(-1 - \Big(\frac{w(k-1)Z_{k-1}(j)}{S(j)} - \frac{w(k)Z_{k}(j)}{S(j)}\Big)\Big)^2\frac{w(k)Z_{k}(j)}{S(j)}\\
&&\  \ + \Big(\frac{w(k-1)Z_{k-1}(j)}{S(j)} - \frac{w(k)Z_{k}(j)}{S(j)}\Big)^2\Big(1 - \frac{w(k-1)Z_{k-1}(j)}{S(j)} + \frac{w(k)Z_{k}(j)}{S(j)}\Big).
\end{eqnarray*}
With respect to $Q_k(n)$,
for $k\geq 2$, we have
\begin{align}
\label{var_eq_phi}
&E\big[(e_{k}(j+1) - E[e_{k}(j+1)|{\mathcal F_j}])^2|\mathcal{F}_j\big] \\
& \  \ =  \Big(1-\frac{w(k-1)Z_{k-1}(j)}{S(j)}\Big)^2\frac{w(k-1)Z_{k-1}(j)}{S(j)} \nonumber \\
& \ \ \ \ \ \ \ \ \ \ \ \ \ \ + \Big(\frac{w(k-1)Z_{k-1}(j)}{S(j)}\Big)^2\Big(1-  \frac{w(k-1)Z_{k-1}(j)}{S(j)}\Big). \nonumber
\end{align}
For $R(n)$, we can compute
\begin{align*}
&E\big[(h(j+1) - E[h(j+1)|{\mathcal F_j}])^2|\mathcal{F}_j \big] \\
& \  \ =  \Big(1-\frac{w(V(j))Z_{V(j)}(j)}{S(j)}\Big)\frac{w(V(j))Z_{V(j)}(j)}{S(j)}. \nonumber 
\end{align*}

For later reference, we recall a law of the iterated logarithm, discussed in \cite{Fisher}, for martingales $M(n)$, say with bounded increments, and whose predictable quadratic variation diverges a.s. as $n\rightarrow\infty$ (cf. Remark 3 in \cite{Fisher}):  Almost surely, 
\begin{equation}
\label{fisher}
0 \ < \  \limsup_{n\rightarrow\infty} \frac{|M(n)|}{(2\langle M\rangle(n)\log\log(\langle M\rangle(n)\vee e^2))^{1/2}} \ < \ \infty.
\end{equation}

Also, a basic convergence result that we will use often is the following (cf. Thm. 5.3.1 in \cite{Durrett_prob}):  For a bounded difference martingale $M_n$, with respect to a probability $1$ set of realizations, 
\begin{align}
\label{bounded_diff}
&\text{either } M_n {\rm \ converges \ } \\
&\ \ \ \text{or  both \ }\limsup_{n\uparrow\infty}M_n = \infty \ \text { and  } \liminf_{n\uparrow\infty}M_n = -\infty. \nonumber
\end{align}

\subsection{Leading order asymptotics of $Z_k(n),S(n)$ and $V(n)$.}
\label{sec:coarse}

In this section we prove Theorem~\ref{degree_thm} after a sequence of subsidiary results.

Recall that the total degree of the graph at time $n$ is $\sum_{k=1}^{n}kZ_k(n)=2n$ and $Z_k(n) = 0$ for $k > n$.  One can bound 
\begin{equation}
\label{durrett_comment}
S(n) \ = \ \sum_{k=1}^{n} w(k)Z_k(n)\leq n^{\gamma -1}\sum_{k=1}^nkZ_k(n) \ = \ 2n^\gamma.
\end{equation}
A sharper bound obtains from the following argument. From the conservation laws~\eqref{conservation}, we get  $\sum_{k} (k-1) Z_k(n) = n-1$. For $\alpha > 1$ and $k\geq 1$, 
we have $(k-1)^\alpha \geq k^\alpha - \alpha k^{\alpha-1}$.  Using this, along with $Z_k(n) = 0$ for $k > V(n)$, we arrive at  
\begin{align}
 \sum_{k=1}^{n} k^\alpha Z_k(n) & \leq \sum_{k=1}^{n} (k-1)^\alpha Z_k(n) + \alpha \sum_{k=1}^{n} k^{\alpha-1} Z_k(n) \nonumber \\
& \leq (V(n)-1)^{\alpha-1} \sum_{k=2}^{V(n)}(k-1)Z_k(n) + \alpha \sum_{k=1}^{V(n)} k^{\alpha-1} Z_k(n) \nonumber \\
& = (n-1)(V(n)-1)^{\alpha-1} + \alpha \sum_{k=1}^{V(n)} k^{\alpha-1} Z_k(n) 
\label{recursion}
\end{align}
For $1 < \alpha \leq 2$, we can use $k^{\alpha-1} \leq k, \sum k Z_k = 2n$, to get 
\begin{equation}
\sum k^\alpha Z_k(n) \leq (n-1)(V(n)-1)^{\alpha-1} + 2 n \alpha \leq n(V(n)^{\alpha-1} + C_\alpha),
\label{alpha_lesseq_2}
\end{equation}
where $C_\alpha = 2 \alpha$ is a constant that only depends on $\alpha$. If $p = \lceil \alpha -1 \rceil \geq 2 $ so that $2 \leq p < \alpha \leq p+1$,  we iterate estimate~\eqref{recursion}  $p$ times to get
\begin{align}
\sum k^\alpha Z_k(n) \ &\leq   (n-1)(V(n)-1)^{\alpha-1} + (n-1)\alpha (V(n)-1)^{\alpha-2} + \cdots \nonumber \\
& + (n-1)  \alpha (\alpha-1)\cdots (\alpha-p+2) (V(n)-1)^{\alpha-p}  \nonumber \\
 & + \alpha(\alpha-1) \cdots (\alpha -p+1) \sum k^{\alpha-p} Z_k(n) \nonumber \\
  & \leq n (V(n) + C_\alpha)^{\alpha-1}
  \label{alpha_gtr_2}
\end{align}
where we use 
$0 < \alpha - p \leq 1$ and $k^{\alpha-p} \leq k$,  noting $\sum k Z_k(n) = 2n$. We obtain the final estimate using the Binomial expansion (with remainder), noting that $V(n) \geq 1, \alpha > p \geq  2$ and $C> 0$ together imply, for some $\theta \in (0,1)$,
\begin{align*}
(V(n) + C)^{\alpha-1}  & = V(n)^{\alpha-1} + (\alpha-1) V(n)^{\alpha-2} C + \cdots \\
& \ \ \ \ + \frac{(\alpha-1)(\alpha-2) \cdots (\alpha -p+1)}{(p-1)!}V(n)^{\alpha-p} C^{p-1} \\
& \ \ \ \ + \frac{(\alpha-1)(\alpha-2) \cdots (\alpha -p)}{p! \cdot (V(n)+ \theta C)^{p+1-\alpha}}  C^p \\
& \geq V(n)^{\alpha-1} + (\alpha-1)V(n)^{\alpha-2}C  \\
& \ \ \ \ + \sum_{\ell=2}^{p-1} \frac{(\alpha-1)(\alpha-2) \cdots (\alpha -\ell)}{\ell!}V(n)^{\alpha-\ell-1} C^\ell.
\end{align*} 
We thus obtain~\eqref{alpha_gtr_2} by  noting $V(n)\geq 1$ and picking $C_\alpha$ sufficiently large, so that
\begin{align*}
(\alpha-1)C_\alpha & \geq \alpha +  2 \alpha(\alpha-1) \cdots (\alpha -p+1)\\
C_\alpha^\ell & \geq \frac{\alpha \cdot \ell!}{\alpha-\ell}, \qquad \ell = 2,3,\ldots,p-1, 
\end{align*}  

Applying the estimates~\eqref{alpha_lesseq_2}~and~\eqref{alpha_gtr_2} to the weight $S(n)$, we get
\begin{equation}
\label{boundinmax}
S(n)  = \sum_{k=1}^n k^\gamma Z_k(n) \leq \begin{cases} n(V(n)^{\gamma-1} + C_\gamma) & {\rm for \ }1 < \gamma \leq 2 \\ n (V(n)+C_\gamma)^{\gamma -1} & {\rm for \ }\gamma > 2 \end{cases}
\end{equation}
Clearly, $V(n) \leq n$ so that $\limsup_n S(n) / n^\gamma \leq 1$.
This estimate is better than the bound in~\eqref{durrett_comment} by a factor of 2. More importantly,  the bound in \eqref{boundinmax} involves $V(n)$ and is therefore useful for our purposes.  In passing, we note another bound of $S(n)$, namely $S(n)\leq n^\gamma + n$, is obtained in equation (4.2.3) in \cite{Durrettbook}, by an optimization procedure. 

\medskip 
In what follows, with respect to (possibly random) quantities $U(n)$ and $u(n) > 0$, we signify that 
\begin{itemize}
\item $U(n)$ is $O(u(n))$ if $\limsup_{n \uparrow \infty} |U(n)|/u(n) < \infty$ a.s.
\item $U(n)$ is $o(u(n))$ if $\limsup_{n \uparrow \infty} |U(n)|/u(n) = 0$ a.s.  
\item $U(n)\gtrsim u(n)$ if $\liminf U(n)/u(n)>0$ a.s.
\item $U(n)\lesssim u(n)$ if $\limsup U(n)/u(n) <\infty$ a.s.
\item $U(n)\approx u(n)$ if $U(n)\gtrsim u(n)$ and $U(n)\lesssim u(n)$.
\end{itemize}

\medskip

We will need a preliminary estimate on the growth of $V(n)$.
\begin{proposition} 
\label{max-degree}
For each $0<\epsilon<1$, the maximum degree $V(n)$ satisfies 
$$
\liminf_{n \to \infty} \frac{V(n)}{n^{1-\epsilon}} > 0 \mbox{ a.s.}
$$
\end{proposition}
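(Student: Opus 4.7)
The plan is a three-stage bootstrap based on the martingale decomposition $V(n) = 1 + A_V(n) + R(n)$, where $A_V(n) := \sum_{j=1}^{n-1} p_{j+1}$ and $p_{j+1} := E[h(j+1)\mid \mathcal F_j]$.

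In the first stage, observing that each of the $j+1$ vertices has degree at most $V(j)$ gives the crude bound $S(j)\leq V(j)^\gamma(j+1)$, which together with $Z_{V(j)}(j)\geq 1$ yields $p_{j+1}\geq 1/(j+1)$.  Consequently $A_V(n)\geq \log n - O(1)\to\infty$.  Since $\langle R\rangle(n)\leq A_V(n)$, applying \eqref{fisher} when $\langle R\rangle(n)\to\infty$, and combining \eqref{bounded_diff} with martingale $L^2$-convergence when $\langle R\rangle(n)$ stays bounded, we obtain $|R(n)| = o(A_V(n))$, so $V(n)\sim A_V(n)\to\infty$ a.s.

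In the second stage, with $V(n)\to\infty$, the refined estimate \eqref{boundinmax} implies $S(j)/(jV(j)^{\gamma-1})\to 1$.  Hence for any prescribed $\delta>0$ there exists a random $n_1$ with $p_{j+1}\geq V(j)/((1+\delta)j)$ for $j\geq n_1$.  Moreover, $V(n)\sim A_V(n)$ produces, for any $\eta>0$, a random $n_2\geq n_1$ with $V(n)\geq (1-\eta)A_V(n)$ for $n\geq n_2$.  Together these yield the multiplicative recursion
$$
A_V(n) \geq A_V(n-1)\Big(1 + \frac{c_0}{n-1}\Big), \qquad c_0 := \frac{1-\eta}{1+\delta},\ n-1\geq n_2.
$$
Telescoping and using $\sum_{j=n_2}^{n-1}\log(1+c_0/j) = c_0\log(n/n_2) + O(1)$, one finds $A_V(n)\gtrsim A_V(n_2)(n/n_2)^{c_0}$.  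Since $A_V(n_2)>0$, $\liminf_n A_V(n)/n^{c_0}>0$ a.s., and $V(n)\sim A_V(n)$ then gives the same lower bound for $V(n)$.  Choosing $\delta,\eta$ small enough that $c_0\geq 1-\epsilon$ completes the proof.

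The main obstacle is the self-referential structure: $A_V$ is bounded below in terms of $V$, while $V$ is in turn controlled by $A_V$ through the martingale decomposition.  The resolution is the LIL-type estimate $|R(n)| = o(A_V(n))$, which promotes the heuristic identity $V(n)\approx A_V(n)$ to an a.s.\ statement and permits the substitution of $V$ by $A_V$ on the right-hand side of the recursion for $A_V$.  A second delicate point is that the resulting recursion must be kept in its multiplicative form $A_V(n)\geq A_V(n-1)(1+c_0/(n-1))$: the crude additive bound $A_V(n)-A_V(n-1)\geq c_0/(n-1)$ would yield only logarithmic growth, whereas the multiplicative version produces the needed polynomial growth $n^{c_0}$ after telescoping.
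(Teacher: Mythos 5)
Your argument is correct, and it takes a genuinely different route from the paper. The paper changes variables to a logarithmic scale, tracking $\Delta(n)=\sum_{j=1}^{V(n)-1}j^{-1}\approx\log V(n)$, whose drift is $\geq D_\gamma(V(n))/n$ by \eqref{boundinmax}; it then makes a detour through expectations (bounded convergence, Jensen twice) to show $EV(n)\gtrsim n^{1-\epsilon_0}$, hence that the error martingale $\rho$ (increments of size $V(j)^{-1}$) has summable bracket and converges a.s., and finally sums the drift pathwise to get $\log V(n)\geq(1-\epsilon)\log n+O(1)$. You instead stay with $V$ itself and its Doob decomposition $V=1+A_V+R$: the crude bound $p_{j+1}\geq 1/(j+1)$ forces $A_V(n)\to\infty$, the structural inequality $\langle R\rangle(n)\leq A_V(n)$ together with \eqref{fisher} (or bracket-convergence on the event where $\langle R\rangle$ stays bounded) gives $R=o(A_V)$, hence $V\sim A_V$, and then \eqref{boundinmax} turns the drift into $p_{j+1}\geq V(j)/((1+\delta)j)\geq c_0 A_V(j)/j$, closing the self-referential loop with a multiplicative (discrete Gronwall) recursion on the compensator that yields $A_V(n)\gtrsim n^{c_0}$, $c_0=(1-\eta)/(1+\delta)$. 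What each buys: the paper's log-scale reduces the problem to an additive $\log n$ lower bound at the price of the moment/Jensen argument to tame $\rho$; your version is purely pathwise, avoids moments entirely, and exploits the $0/1$ increment structure of $V$ through $\langle R\rangle\leq A_V$, at the price of needing the multiplicative (rather than additive) form of the recursion, which you correctly identify as essential. Two small points of care: the assertion ``$S(j)/(jV(j)^{\gamma-1})\to 1$'' is not available at this stage — \eqref{boundinmax} gives only the upper-bound direction, the matching lower bound being Proposition \ref{S_asymptotics}, proved later — but since you use only $S(j)\leq(1+\delta)jV(j)^{\gamma-1}$ eventually, this is harmless; and on the event where $\langle R\rangle(n)$ stays bounded you should invoke the localized bracket criterion (bounded-increment martingales converge a.s. on $\{\langle R\rangle_\infty<\infty\}$) rather than plain $L^2$ convergence, since the boundedness is a random event — the same level of care the paper's own case splits implicitly require.
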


\begin{proof}

{\it Step 1.}  First, we observe that $V(n) \nearrow \infty$ a.s.
The probability that the maximum value increases by $1$ at time $n+1$, given $\mathcal{F}_n$, is bounded below, $$\frac{Z_{V(n)}(n)V(n)^\gamma}{S(n)} \geq \frac{C}{n},$$
 using \eqref{boundinmax} and $Z_{V(n)}(n), V(n)\geq 1$, so that, respectively for $1 < \gamma \leq 2$ and $\gamma > 2$, 
 $$
 \left[\frac{V(n)^{\gamma-1}}{V(n)^{\gamma-1}+C_\gamma}\right] \geq (1+C_\gamma)^{-1}
 \text{ and } 
 \left[\frac{V(n)}{V(n)+C_\gamma}\right]^{\gamma-1}\geq (1+C_\gamma)^{1-\gamma} .
 $$
Hence, $\sum P(V(n+1)-V(n) = 1|\mathcal{F}_n)= \infty$, and it follows from a Borel-Cantelli lemma (Thm. 5.3.2 in \cite{Durrett_prob}) that $V(n) \nearrow \infty$ a.s.  We note a version of this argument is used for the model in \cite{CHJ}.

{\it Step 2.} The idea now is consider $V(n)$ in a $\log$ scale.  Consider the evolution of $\Delta(n) = \sum_{j=1}^{V(n)-1} j^{-1} \approx \log(V(n))$. As $V(n) \nearrow \infty$, we have that $\Delta(n) \nearrow \infty$ a.s.  Note also, our initial condition on $G_1$ yields $\Delta(2) = 1$. 

 The increments $\delta$ of $\Delta$ are given, conditioned on $\mathcal{F}_n$, by
\begin{equation*}
\delta(n+1)  = \Delta(n+1)-\Delta(n) =  \begin{cases}  V(n)^{-1} & \mbox{ with prob. } \frac{Z_{V(n)} V(n)^\gamma}{S(n)} \\
 0 & \mbox{ with prob. } 1 - \frac{Z_{V(n)} V(n)^\gamma}{S(n)} \\
\end{cases}
\end{equation*}
Let also $\rho$ denote the associated martingale, with differences $\delta(j+1)-E[\delta(j+1)|\mathcal{F}_j]$ (bounded by $1$), and quadratic variation 
$$\langle \rho\rangle(j)= \sum_{j=1}^{n-1} E\big[(\delta(j+1)-E[\delta(j+1)|\mathcal{F}_j])^2|\mathcal{F}_j].$$ 
We may compute
\begin{eqnarray}\label{quad_var_Delta}
&&E[(\delta(n+1) - E[\delta(n+1)\ | \ \mathcal{F}_n])^2|\mathcal{F}_n] \\
&&\ \ \ \ \  = \ \frac{1}{V(j)^2}\Big(1-\frac{Z_{V(j)}(j)V(j)^{\gamma}}{ S(j)}\Big)\frac{Z_{V(j)}(j)V(j)^{\gamma}}{S(j)} \leq \frac{1}{V(j)^{2}}. \nonumber
\end{eqnarray}

{\it Step 3.} From the martingale decomposition of $\Delta$, we get
\begin{equation*}
\Delta(n) = 1 + \sum_{j=2}^{n-1} \frac{Z_{V(j)}(j)V(j)^{\gamma-1}}{S(j)} + \rho(n).
\end{equation*}

From \eqref{boundinmax} and $Z_{V(n)}(n)\geq 1$, we obtain, for $n \geq 2$, that
\begin{align}
\Delta(n+1) - \Delta(n) & \geq \frac{V(n)^{\gamma-1}}{S(n)} + \rho(n+1) - \rho(n) \nonumber \\
& \geq \frac{D_\gamma(V(n))}{n}   + \rho(n+1) - \rho(n) , 
\label{delta-evolve}
\end{align}
where
\begin{equation*}
D_\gamma(V(n)) = \begin{cases}   \left(1 + C_\gamma/V(n)^{\gamma-1}\right)^{-1}  & {\rm for \ }1 < \gamma \leq 2, \\ 
\left(1 + C_\gamma/V(n)\right)^{1-\gamma} & {\rm for \ }\gamma > 2, \end{cases}
\end{equation*}
so that $D_\gamma(V(n)) \nearrow 1$ as $V(n) \nearrow \infty$ for all $\gamma > 1$.

{\it Step 4.}  
Since $V(n) \nearrow \infty$ a.s., by bounded convergence, $\lim_{n\uparrow\infty} E[D_\gamma(V(n))] = 1$. Consequently, given $0<\epsilon_0<1$, there is a sufficiently large deterministic time $n_0$ such that 
$ E[D_\gamma(V(n))] \geq 1-\epsilon_0$,
for $n \geq n_0$. Summing \eqref{delta-evolve} over times from $n_0$ to $n$ yields
$$E\Delta(n+1) \geq E\Delta(n_0) + \left(1-\epsilon_0\right)\sum_{j=n_0}^n\frac{1}{j}.$$
Then, $E\Delta(n) \gtrsim \left(1-\epsilon_0\right)\log(n)$.  By Jensen's inequality, and the bound $\log (V(n)-1) \geq  \Delta(n)-\Delta(2)$ a.s., following from $\int_1^b x^{-1}dx \geq \sum_{j=2}^{b}j^{-1}$, we have $\log EV(n) \geq E\log V(n)\gtrsim E\Delta(n)$ and so $EV(n) \gtrsim n^{1-\epsilon_0}$.

Hence, by Jensen's inequality, applied to $\phi(x)=x^{-2}$, and \eqref{quad_var_Delta}, we have
$$E\langle \rho\rangle(j+1) - E\langle \rho\rangle(j) \leq E\left[V(j)^{-2}\right] \leq \big(EV(j)\big)^{-2} \lesssim j^{-2(1-\epsilon_0)}.$$ 
Now note $E\langle \rho\rangle(n_0) \leq n_0$ by \eqref{quad_var_Delta} and $V(j)\geq 1$.  Then, with the choice $\epsilon_0 = 1/4$ say, monotone convergence yields 
$$
\lim E\langle \rho\rangle(n) = E\lim \langle \rho\rangle(n) \leq n_0 + C\sum_{j\geq n_0} j^{-2(1-\eps_0)} <\infty.
$$

{\it Step 5.} 
As $\sup_n E \rho_n^2 = \sup_n E\langle \rho\rangle(n) < \infty$ by Step 4, standard martingale convergence results (cf. Thm. 5.4.5 \cite{Durrett_prob}) imply that $\rho_n$ converges to a limit random variable $\rho_\infty$ a.s and also in $L^2$. We note, as $\rho_\infty \in L^2$, it is finite a.s.

{\it Step 6.} Given $0 < \epsilon < 1$, noting again $V(n)\nearrow\infty$ a.s., we may choose now a (random) time $n_1$ such that $D_\gamma(V(n)) \geq \left(1-\epsilon\right)$ and $\rho(n) \geq \rho_\infty -1$.
 Note also that $|\Delta(n) - \log(V(n))| < 1$ and $|\log(n) - \sum_{j=1}^{n-1} j^{-1}| < 1$ for all $n$.
Summing \eqref{delta-evolve}, from $n_1$ to $n$, we thus have 
$$\Delta(n) - \Delta(n_1) \geq \left(1-\epsilon\right)\sum_{j=n_1}^{n-1} \frac{1}{j} +\rho(n)-\rho(n_1)$$
and so
$\Delta(n) - (1-\epsilon) \log n \geq \Delta(n_1) - (1-\epsilon) \log(n_1)+ \rho_\infty -3$ is bounded below a.s.  Hence, for a constant $C$, depending on the realization, we have a.s. that $V(n) \geq Cn^{1-\epsilon}$ for all large $n$. 
\end{proof}

We now address the growth of $Z_1(n)$ and $\{\Phi_k(n)\}$. 
\begin{proposition}
\label{Z_Phi_asymptotics}
Let $\epsilon > 0$ be such that  $\gamma' \equiv \gamma(1-\epsilon) > 1$ and $\gamma'/(\gamma'-1)$ is not an integer. We have a.s. that 
$$\lim_{n\uparrow\infty} \frac{M_1(n)}{n} = 0, \ \ \lim_{n\uparrow\infty}\frac{Z_1(n)}{n}=  1 \\ \  \ {\rm and \ \ }\limsup_{n\uparrow\infty}\frac{\Phi_k(n)}{n^{k-(k-1)\gamma'} \vee 1} < \infty.$$
\end{proposition}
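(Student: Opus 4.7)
The plan is to establish the three assertions in sequence, with the key tool being the consequence of Proposition~\ref{max-degree} that $S(j)\gtrsim j^{\gamma'}$ a.s. This bound follows immediately from $S(j)\geq V(j)^\gamma Z_{V(j)}(j)\geq V(j)^\gamma$ together with $V(j)\gtrsim j^{1-\epsilon}$, so that $S(j)\gtrsim j^{\gamma(1-\epsilon)}=j^{\gamma'}$. Throughout, I would apply the LIL~\eqref{fisher} and the bounded-difference dichotomy~\eqref{bounded_diff} to control martingale fluctuations.

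For $M_1(n)/n\to 0$, observe that~\eqref{quad_comp} yields $\langle M_1\rangle(n)\leq n$. If $\langle M_1\rangle(n)$ stays bounded then $M_1$ converges a.s.\ by~\eqref{bounded_diff}; otherwise~\eqref{fisher} gives $|M_1(n)|=O(\sqrt{n\log\log n})$. Either way $M_1(n)=o(n)$. For $Z_1(n)/n\to 1$, I would substitute into the decomposition~\eqref{mart_decomp} the bounds $Z_1(j)\leq j+1$ and $S(j)\gtrsim j^{\gamma'}$, giving
$$\sum_{j=1}^{n-1}\frac{w(1)Z_1(j)}{S(j)}\ \lesssim\ \sum_{j=1}^{n-1} j^{1-\gamma'}\ =\ o(n),$$
since $\gamma'>1$. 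Combining with the first claim, $Z_1(n)=2+(n-1)-o(n)+o(n)$, and the ratio limit follows.

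For the bound on $\Phi_k(n)$, I would proceed by induction on $k$, with base case $\Phi_1(n)=n+1$ consistent with the exponent $1-0\cdot\gamma'=1$. Assuming $\Phi_{k-1}(n)\lesssim n^{(k-1)-(k-2)\gamma'}\vee 1$ (hence the same bound for $Z_{k-1}(n)$), I substitute into
$$\Phi_k(n)=\Phi_k(1)+\sum_{j=1}^{n-1}\frac{w(k-1)Z_{k-1}(j)}{S(j)}+Q_k(n)$$
and apply $S(j)\gtrsim j^{\gamma'}$. The drift sum is then controlled by $\sum_j j^{(k-1)-(k-1)\gamma'}$, which, because the hypothesis $\gamma'/(\gamma'-1)\notin\Z$ excludes the critical exponent $-1$, is either $\lesssim n^{k-(k-1)\gamma'}$ when $k<\gamma'/(\gamma'-1)$, or $O(1)$ when $k>\gamma'/(\gamma'-1)$. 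From~\eqref{var_eq_phi}, $\langle Q_k\rangle(n)$ is dominated by the same sum; so \eqref{fisher}--\eqref{bounded_diff} give $Q_k(n)=o(n^{k-(k-1)\gamma'})$ in the first case and convergence in the second, and the inductive bound on $\Phi_k$ closes.

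The main obstacle is less technical than bookkeeping: one must track the induction across the transition at $k=\gamma'/(\gamma'-1)$, which is why the non-integrality assumption is needed to rule out the marginal $\sum 1/j$ contribution at the boundary. The estimate $S(j)\gtrsim j^{\gamma'}$ from Proposition~\ref{max-degree} is used at every inductive step, and ensures that at each level the martingale noise picked up from $Q_k$ is of strictly smaller order than the drift contribution, so that the polynomial bound propagates.
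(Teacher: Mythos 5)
Your proposal is correct and follows essentially the same route as the paper: the lower bound $S(j)\gtrsim j^{\gamma'}$ from Proposition~\ref{max-degree}, the decomposition \eqref{mart_decomp} with $Z_1(j)\le j+1$ to make the drift $o(n)$, the law of the iterated logarithm \eqref{fisher} for the divergent-variation case, and an induction on $k$ through the $\Phi_k$-decomposition with $\langle Q_k\rangle(n)$ dominated by the drift sum; your observation about the non-integrality of $\gamma'/(\gamma'-1)$ excluding the marginal $\sum j^{-1}$ is exactly the role of that hypothesis. The one loose point is the convergent case: \eqref{bounded_diff} by itself does not convert ``$\langle M_1\rangle(n)$ (or $\langle Q_k\rangle(n)$) stays bounded'' into a.s.\ convergence, since it only offers the converge-or-oscillate dichotomy. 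You either need the standard fact that a bounded-increment martingale converges a.s.\ on the event $\{\langle M\rangle_\infty<\infty\}$, or the paper's trick: use the sign information $n+1-Z_1(n)\ge 0$ (respectively $\Phi_k(n)\ge 0$) together with the convergent drift sum to rule out $\liminf=-\infty$, and only then invoke \eqref{bounded_diff}. Either fix is one line, so this is an imprecision of citation rather than a gap in the argument.
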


\begin{proof}
Proposition~\ref{max-degree} implies that, for any $0<\epsilon <1$,  $S(j) \geq V(j)^\gamma \gtrsim j^{(1-\epsilon) \gamma}$.  Write, noting the decomposition \eqref{mart_decomp}, that
\begin{equation}
\label{decomp_Z_1}
\frac{1}{n}Z_1(n) \ = \ \frac{1}{n}Z_1(1) + \frac{1}{n}\sum_{j=1}^{n-1} \Big(1- \frac{w(1)Z_1(j)}{S(j)}\Big) + \frac{1}{n}M_1(n).
\end{equation}
Since $Z_1(1)=2$, the first term on the right-hand side equals $2n^{-1}$. The second term can be estimated using $Z_1(j)\leq j+1$, so that
$$\frac{Z_1(j)}{S(j)} \ \leq \ \frac{j+1}{j^{\gamma'}} \ = \ O(j^{1-\gamma'}).$$
Then, if $1<\gamma'< 2$, the sum
\begin{equation}
\label{sum}
\sum_{j=1}^{n-1}\frac{Z_1(j)}{S(j)} \ = \ O(n^{2-\gamma'}).\end{equation}
If $\gamma' = 2$, this sum is $O(\log(n))$ and, if $\gamma'>2$, this sum is convergent.  
Note, in particular, in all cases that the sum in \eqref{sum} is $o(n)$ a.s.  

We now analyze the quadratic variation $\langle M_1 \rangle(n)$.  Noting \eqref{quad_comp}, we have $n+1-Z_1(n) = \sum_{j=1}^{n-1} \frac{w(1)Z_1(j)}{S(j)} + M_1(n)$ and 
$$\frac{1}{2}\sum_{j=j_0}^n \frac{w(1)Z_1(j)}{S(j)}\leq \langle M_1\rangle(n) \leq \sum_{j=1}^n \frac{w(1)Z_1(j)}{S(j)} = o(n),$$
where $j_0$ is chosen large enough.

  If the sum $\sum_{j=1}^{n-1} \frac{w(1)Z_1(j)}{S(j)}$ converges, then $M_1(n)$, being a bounded difference martingale, must converge, as otherwise $\liminf M_1(n) =-\infty$, which is impossible as $n-Z_1(n)\geq 0$ (cf. \eqref{bounded_diff}).  On the other hand, if the sum diverges, then, by the law of the iterated logarithm \eqref{fisher}, $M_1(n)$ is at most $(o(n)\log\log(o(n)))^{1/2}$.  In either case, $n^{-1}M_1(n)\rightarrow 0$ a.s.
Also, from the decomposition \eqref{decomp_Z_1},
we conclude that $Z_1(n)/n \rightarrow 1$ a.s.

We now turn to the counts $\{\Phi_\ell\}$.  Since $\Phi_1(j) = j+1$, we have $\lim_{n\uparrow\infty}\frac{\Phi_1(n)}{n} = 1$. Assuming that the proposition statement for $\{\Phi_\ell\}$ holds for index $k-1$, we will prove it for $k$. 

Note, by the induction hypothesis, 
$$
\sum_{j=1}^{n-1} \frac{w(k-1)Z_{k-1}(j)}{S(j)} \leq \sum_{j=1}^{n-1} \frac{w(k-1)\Phi_{k-1}(j)}{j^{\gamma'}} = O\big(n^{k - (k-1)\gamma'} \vee 1\big).
$$
By the computations \eqref{var_eq_phi}, the quadratic variation $\langle Q_k\rangle(n)$ satisfies
$$\frac{1}{2}\sum_{j=j_0}^{n-1} \frac{w(k-1)Z_{k-1}(j)}{S(j)}\leq \langle Q_k\rangle(n) \leq \sum_{j=1}^{n-1} \frac{w(k-1)Z_{k-1}(j)}{S(j)}  = O\big(n^{k - (k-1)\gamma'}\vee 1\big),
$$
where again $j_0$ is chosen large enough.  

Consider the equation 
$$\Phi_k(n) = \Phi_k(1) +\sum_{j=1}^{n-1} \frac{w(k-1)Z_{k-1}(j)}{S(j)}+ Q_k(n).$$
  If the sum $\sum_{j=1}^{n-1} \frac{w(k-1)Z_{k-1}(j)}{S(j)}$ converges, then as $\Phi_k$ is nonnegative, $\liminf Q_k(n)$ cannot equal $-\infty$; hence, $Q_k(n)$, being a bounded difference martingale, must also converge (cf.~\eqref{bounded_diff}).  If, however, the sum diverges, by the law of the iterated logarithm \eqref{fisher}, $Q_k(n)$ is of the order 
$$Q_k(n) = O\Big( \big(n^{k - (k-1)\gamma'}\log\log n^{k - (k-1)\gamma'}\big)^{1/2} \Big).$$  
In either case, we derive that $\Phi_k(n)/n^{k - (k-1)\gamma'}\vee 1 = O(1)$.
\end{proof}

Recall, for each $k\geq 1$, that $\Phi_k(n)$ is nondecreasing.  
By the last proposition, for $0<\epsilon<1$ such that $\gamma(1-\epsilon)>1$, there is a finite index $B$, in particular $B= \lceil \gamma(1-\epsilon)/(\gamma(1-\epsilon) -1\rceil$, such that $\lim_{n\uparrow\infty}\Phi_B(n)=:\Phi_B(\infty)$ is a.s. finite.  Moreover, there is a (random) finite time $N_B$ such that $\Phi_B(n) = \Phi_B(\infty)$ for $n\geq N_B$.
\begin{proposition}
\label{frozen}
Only one (random) node represented in the count $\Phi_B(\infty)$ has diverging degree, while all the other nodes in this count, and elsewhere in $\{G_n\}_{n\geq N_B}$, have bounded degree a.s.  
\end{proposition}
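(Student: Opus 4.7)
\emph{Proof proposal.} The claim naturally splits into two parts. For vertices outside $\Phi_B(\infty)$: since $\Phi_B$ is non-decreasing and constant for $n\geq N_B$, no such vertex can ever attain degree $B$ (that would strictly increment $\Phi_B$), so every such node has degree at most $B-1$ at all times. Writing $\Phi_B(\infty)=\{v_1,\ldots,v_m\}$ and $d_i(n):=d_{v_i}(n)$, what remains is the \emph{monopoly} statement that exactly one of $d_1,\ldots,d_m$ diverges a.s.

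\emph{At least one diverges.} Combining the conservation law~\eqref{conservation} with Proposition~\ref{Z_Phi_asymptotics} (which gives $n+1-Z_1(n)=o(n)$ a.s., so $\sum_{k=2}^{B-1}kZ_k(n)\leq (B-1)(n+1-Z_1(n))=o(n)$),
\[
\sum_{i=1}^m d_i(n)\ =\ \sum_{k\geq B}kZ_k(n)\ =\ 2n-Z_1(n)-\sum_{k=2}^{B-1}kZ_k(n)\ =\ n+o(n)\to\infty \text{ a.s.}
\]
Since $m$ is (random but) finite, at least one $d_i(n)\to\infty$.

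\emph{At most one diverges.} Consider the random subsequence of times $T_1<T_2<\cdots$ at which the new edge attaches inside $\Phi_B(\infty)$. On $\{T_\ell=n+1\}$, conditional on $\mathcal{F}_n$, the edge lands on $v_i$ with probability $d_i(n)^\gamma/\sum_j d_j(n)^\gamma$, so the process $(d_1(T_\ell),\ldots,d_m(T_\ell))_{\ell\geq 1}$ evolves as a generalized P\'olya urn on $m$ colors with reinforcement $d\mapsto d^\gamma$, $\gamma>1$. I would then invoke Rubin's continuous-time embedding: take independent families $\{\xi_k^{(i)}\}\sim\mathrm{Exp}(1)$ and set
\[
\tau_i\ :=\ \sum_{k\geq d_i(N_B)}\frac{\xi_k^{(i)}}{k^\gamma},
\]
each finite a.s.\ because $\sum k^{-\gamma}<\infty$ for $\gamma>1$. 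Independence and absolute continuity force the $\tau_i$'s to be a.s.\ distinct; let $i^*=\arg\min_i\tau_i$. The sub-process then has the law of the ordered tick-times of $m$ independent pure-birth chains of rate $k\mapsto k^\gamma$, and only the $i^*$-th chain produces further ticks after time $\tau_{i^*}$. Thus all but finitely many sub-process edges land on $v_{i^*}$, so $d_{i^*}\to\infty$ while every $d_j$ ($j\neq i^*$) is touched only finitely often and stays bounded.

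\emph{Main obstacle.} The delicate point is rigorously extracting the sub-process at the $T_\ell$'s as a standalone $\gamma$-urn and justifying Rubin's coupling, given that these times are interleaved with edges attaching outside $\Phi_B(\infty)$; one must check that, conditional on $\mathcal{F}_{T_\ell-1}$ and on a sub-process step occurring, the law of the chosen vertex depends only on $(d_1^\gamma,\ldots,d_m^\gamma)$, which then permits the independent-clock coupling. A self-contained alternative, more in the martingale spirit of this paper, is to first show that each $d_i(n)^{1-\gamma}$ converges a.s.: it is bounded in $[0,1]$ with absolutely summable conditional drift of order $O(1/S(n))$ and summable predictable quadratic variation, using $\sum 1/S(j)<\infty$ from~\eqref{boundinmax} and Proposition~\ref{max-degree}. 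Each $d_i$ then either freezes at a finite value or diverges, and one rules out two simultaneous divergences by a pairwise $2$-color superlinear urn argument.
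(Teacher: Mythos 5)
Your proposal is correct and takes essentially the same route as the paper: the monopoly among the finitely many vertices counted by $\Phi_B(\infty)$ is proved exactly as in the paper, via the observation that, conditioned on a selection landing in this set, the chosen vertex has law proportional to $d_i^\gamma$ (the paper's \eqref{same_mc}), followed by Rubin's embedding into independent continuous-time weighted P\'olya urns whose explosion times are a.s.\ finite (since $\sum_k k^{-\gamma}<\infty$ for $\gamma>1$) and a.s.\ distinct, so that after a random time only the first-exploding vertex is ever selected. The only cosmetic difference is that you derive ``at least one degree diverges'' from \eqref{conservation} together with Proposition~\ref{Z_Phi_asymptotics}, whereas the paper uses $V(n)\nearrow\infty$ from Proposition~\ref{max-degree}; the interleaving issue you flag is handled in the paper precisely by the conditional-law observation you propose, combined with decomposing over the countably many possible realizations of the frozen set.
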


\begin{proof} Since the maximum degree $V(n)$ diverges a.s. (Proposition \ref{max-degree}), we have $\Phi_B(N_B)\geq 1$, and also that vertices from the set corresponding to $\Phi_B(N_B)$ are selected infinitely many times.     
Denote the (random) finite number of vertices corresponding to $\Phi_B(N_B)$ by $\{x_1,\ldots, x_m\}$.  There are countable possible configurations of this set.  

Now, given that an infinite number of selections are made in a fixed finite set $\{y_1,\ldots, y_m\}$ at times $\{s_j\}$, we see that vertex $y_i$ is chosen at time $s_{j+1}$, conditioned on the state at time $s_j$, with probability 
\begin{equation}\label{same_mc}
w\big(d_{y_i}(s_j)\big)/\sum_{\ell =1}^m w\big(d_{y_\ell}(s_j)\big).
\end{equation}
  Let $s_0 = 1$ and $\{y_1(j),\ldots, y_m(j\}_{j\geq 0}$ be the degrees of these vertices at times $\{s_j\}_{j\geq 0}$, according to this conditional update rule.  

Such a `discrete time weighted Polya urn', $\{y_1(j),\ldots, y_m(j)\}_{j\geq 0}$, can be embedded in a finite collection of independent continuous time weighted Polya urns, $\{U_1,\ldots, U_m\}$.  Let $U_i$ be an urn process which increments by one at time $t$ with rate $w\big(U_i(t)\big)$ and initial size given by $w(d_{y_i}(1))$.  Since $w(d)=d^\gamma$ and $\gamma>1$, so that $\sum w(d)^{-1}<\infty$, the urn $U_i$ explodes at a finite time $T_i$ a.s.  Let $\tau_j$ be the $j$th time one of the urns $\{U_i\}_{i=1}^m$ is incremented.  The discrete time weighted Polya urn system is recovered by evaluating the continuous time process at times $\tau_j$ for $j\geq 1$.  Indeed, from properties of exponential r.v.'s, given the state of the system at time $\tau_j$, the urn $U_i$ rings at time $\tau_{j+1}$ with probability $w\big(U_i(\tau_j)\big)/\sum_{\ell=1}^m   w\big(U_\ell(\tau_j)\big)$, the same as \eqref{same_mc}.

Now, one of the continuous time urns, say $U_i$, explodes first, meaning it is selected an infinite number of times.  Another urn $U_j$ cannot also explode at the same time $T_j=T_i$ since $T_i$, being a sum of independent exponential r.v.'s, is continuous and independent of $T_j$.  Hence, a.s. after a (random) time, selections in the finite system are made only from one (random) urn or vertex.  

Hence, by decomposing on the countable choices of the set $\{x_1,\ldots, x_m\}$ corresponding to $\Phi_B(N_B)$, we see that there is exactly one vertex in the set with diverging degree a.s.  All the others in the set are only selected a finite number of times, and therefore have finite degree a.s. 
 
 With respect to the graphs $\{G_n\}$, vertices in the counts $\{Z_k\}_{k=1}^{B-1}$, by definition have degree bounded by $B-1$.  Hence, all vertices in the graph process have bounded degree, except for the maximum degree vertex, which diverges a.s. Similar types of argument can be found in \cite{CHJ} and \cite{Athreya}. \end{proof}

We now derive a.s. and $L^1$ limits with respect to $V(n)$ and $S(n)$.  These we remark partly addresses the question in \eqref{durrett_question}.

\begin{proposition}
\label{S_asymptotics}
We have, a.s. and in $L^1$ that 
$$ \lim_{n\uparrow\infty}\frac{1}{n}V(n)\ = \ 1 \ \ {\rm and \ \ }
\lim_{n\uparrow\infty}\frac{1}{n^\gamma}S(n)\ = \ 1.$$
\end{proposition}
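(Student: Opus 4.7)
My plan is to bootstrap from the three previous propositions: \emph{(i)} $Z_1(n)/n\to 1$ a.s. (Proposition~\ref{Z_Phi_asymptotics}), \emph{(ii)} all but one vertex have bounded (though possibly random-bounded) degree eventually (Proposition~\ref{frozen}), and \emph{(iii)} the bound \eqref{boundinmax} expressing $S(n)$ in terms of $V(n)$.

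\textbf{Step 1: a.s. limit for $V(n)/n$.} The upper bound $V(n)\leq n$ is immediate from the construction. For the matching lower bound, I would fix an $\omega$ in the full-measure set on which both Proposition~\ref{Z_Phi_asymptotics} and Proposition~\ref{frozen} apply, so that there is a random time $N_B(\omega)$, a random bound $M(\omega)<\infty$, and a random vertex $v_*$ such that for $n\geq N_B$ every vertex other than $v_*$ has degree at most $M$. Using the conservation law $\sum_k kZ_k(n)=2n$ and splitting between $v_*$ (which contributes $V(n)$) and the remaining vertices, I bound
$$
\sum_{v\neq v_*} d_v(n) \ \leq\ Z_1(n) + M\bigl(n+1-Z_1(n)\bigr),
$$
since the number of non-$v_*$ vertices of degree $\geq 2$ is at most $n+1-Z_1(n)$. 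Rearranging gives
$$
\frac{V(n)}{n}\ \geq\ 2 - \frac{Z_1(n)}{n} - M\!\left(1+\frac{1}{n}-\frac{Z_1(n)}{n}\right),
$$
and since $Z_1(n)/n\to 1$ a.s., the right-hand side tends to $1$. Together with $V(n)/n\leq 1$, this yields $V(n)/n\to 1$ a.s.

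\textbf{Step 2: a.s. limit for $S(n)/n^\gamma$.} For the lower bound, $S(n)\geq V(n)^{\gamma}\cdot Z_{V(n)}(n)\geq V(n)^\gamma$, so $S(n)/n^\gamma\geq (V(n)/n)^\gamma\to 1$. For the upper bound, I invoke \eqref{boundinmax}: in the range $1<\gamma\leq 2$,
$$
\frac{S(n)}{n^\gamma}\ \leq\ \left(\frac{V(n)}{n}\right)^{\gamma-1}+\frac{C_\gamma}{n^{\gamma-1}} \ \to\ 1,
$$
while for $\gamma>2$,
$$
\frac{S(n)}{n^\gamma}\ \leq\ \left(\frac{V(n)+C_\gamma}{n}\right)^{\gamma-1}\ \to\ 1.
$$
Combining the two sides gives $S(n)/n^\gamma\to 1$ a.s.

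\textbf{Step 3: $L^1$ convergence.} Since $V(n)/n\leq 1$ deterministically and $S(n)/n^\gamma\leq 2$ by \eqref{durrett_comment}, both sequences are uniformly bounded, so dominated convergence upgrades the a.s.\ limits to $L^1$ limits.

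The only genuine subtlety is the randomness of the bound $M(\omega)$ in Step~1: I have to argue pointwise in $\omega$ rather than in expectation. But because $n+1-Z_1(n)=o(n)$ a.s.\ (a consequence of $Z_1(n)/n\to 1$), multiplying this by a fixed realization-dependent constant $M(\omega)$ still yields an $o(n)$ error, and the $\omega$-by-$\omega$ argument goes through. I do not foresee other obstacles, as the remaining calculations are routine applications of Proposition~\ref{Z_Phi_asymptotics}, Proposition~\ref{frozen}, and \eqref{boundinmax}.
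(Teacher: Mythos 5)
Your proposal is correct, and the $V(n)$ part is essentially the paper's argument: both exploit the conservation law $\sum_k kZ_k(n)=2n$ together with $Z_1(n)/n\to 1$ (Proposition~\ref{Z_Phi_asymptotics}) and the fact from Proposition~\ref{frozen} that, apart from the single special vertex, all degrees stay bounded (degrees $\leq B-1$ outside the stabilized set $\hat\Phi_B(\infty)$, finite limiting degrees for the finitely many other vertices inside it), so the non-special, non-leaf vertices contribute only $o(n)$ to the total degree; your single realization-dependent bound $M(\omega)$ is just a compressed version of the paper's split into ``degrees below $B$'' plus ``finite total degree of $\hat\Phi_B(\infty)\setminus\{v\}$''. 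Where you genuinely diverge is the $S(n)$ step: the paper re-decomposes the weight as $S(n)=V(n)^\gamma+Z_1(n)+\sum_{k\leq B-1}k^\gamma Z_k(n)+O(1)=(n+o(n))^\gamma+n+o(n)$, whereas you sandwich $V(n)^\gamma\leq S(n)\leq n\,(V(n)^{\gamma-1}+C_\gamma)$ or $n\,(V(n)+C_\gamma)^{\gamma-1}$ using the already-established bound \eqref{boundinmax} and then feed in $V(n)/n\to 1$. Your route is a bit cleaner since it avoids repeating the vertex-by-vertex bookkeeping for the weights, at the (negligible) cost of relying on the combinatorial estimate \eqref{boundinmax}; the $L^1$ upgrade by bounded convergence is identical in both treatments.
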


\begin{proof}
By Proposition \ref{Z_Phi_asymptotics}, $Z_1(n) = n + o(n)$ and $\Phi_2(n) = \sum_{k\geq 2}Z_k(n) = o(n)$ a.s.  By Proposition \ref{frozen}, $V(n)$ is achieved at a fixed (random) vertex, say $v$, a.s.  By \eqref{conservation} $\sum_{k\geq 1} kZ_k(n) = 2n$ so that, for all large $n$,
$$
V(n) = \Big(2 n - Z_1(n)\Big) - \sum_{k =2}^{V(n)-1} k Z_k(n) 
$$ 
Recall the index $B$ before Proposition \ref{frozen}, and let $\hat\Phi_B(\infty)$ be the vertices represented by $\Phi_B(\infty)$.  We have, by Propositions \ref{Z_Phi_asymptotics} and \ref{frozen}, for all large $n$, that
\begin{eqnarray*}
\sum_{k =2}^{V(n)-1} k Z_k(n) &=& \sum_{k=2}^{B-1} kZ_k(n) 
 + {\rm finite\ total\ degree \ of \ vertices \ in \ }\hat\Phi_B(\infty)\setminus\{v\} \\ 
& =&  o(n).
\end{eqnarray*}
Since $Z_1(n)/n \rightarrow 1$ a.s. by Proposition \ref{Z_Phi_asymptotics}, we obtain therefore that $V(n)/n\rightarrow 1$ a.s.

Further, for all large $n$, the weight of the graph equals
\begin{eqnarray*}
S(n) & = & {\rm weight \ of \ }v + Z_1(n) + \sum_{k=1}^{B-1} k^\gamma Z_k(n)\\
&&\ \ \ \  + {\rm finite \ total \ weight \ of \ vertices \ in \ }\hat\Phi_B(\infty)\setminus\{v\}\\
&  = & (n + o(n))^\gamma + n + o(n) \ \ {\rm  a.s.},
\end{eqnarray*} 
from which the desired a.s. limit for $S(n)/n^{\gamma}$ is recovered.

The $L^1$ limits follow from bounded convergence, as $0\leq V(n)\leq n$, and $0\leq S(n)/n^\gamma\leq 2$, say by \eqref{durrett_comment}.
\end{proof}

\medskip

We now show Theorem \ref{degree_thm}.

\medskip
\noindent{\bf Proof of Theorem \ref{degree_thm}.}  
To establish the first part, when $\frac{\gamma}{\gamma -1}>k\geq 1$, we show by induction
that a.s.
\begin{equation}
\label{mean_limit}
 \lim_{n\rightarrow \infty}\frac{1}{n^{k-(k-1)\gamma}}M_k(n) = 0 \ \ {\rm and \ \ } \lim_{n\rightarrow\infty} \frac{1}{n^{k - (k-1)\gamma}}Z_k(n) \ = \ a_k.
\end{equation}

The base case $k=1$ has been shown in Proposition \ref{Z_Phi_asymptotics}.   Suppose now that the claim holds for $M_{k-1}(n)$ and $Z_{k-1}(n)$.  
Write
$$\frac{1}{n^{k - (k-1)\gamma}}\Phi_k(n) \ = \ \frac{1}{n^{k-(k-1)\gamma}}\sum_{j=1}^{n-1} \frac{w(k-1)Z_{k-1}(j)}{S(j)} + Q_k(n).$$
By the asymptotics of $S(j)$ in Proposition \ref{S_asymptotics}, and the assumption, we have a.s.
$$\frac{Z_{k-1}(j)}{S(j)} \approx j^{k-1 - (k-1)\gamma}.$$
Then, as $k-(k-1)\gamma>0$, we have a.s.
$$\frac{1}{n^{k-(k-1)\gamma}}\sum_{j=1}^{n-1} \frac{w(k-1)Z_{k-1}(j)}{S(j)} \rightarrow \frac{w(k-1)a_{k-1}}{k - (k-1)\gamma}.$$
Also, by the computations \eqref{var_eq_phi}, the quadratic variation $\langle Q_k\rangle(n)$ satisfies a.s.
$$\langle Q_k\rangle(n) \approx \sum_{j=1}^{n-1} \frac{w(k-1)Z_{k-1}(j)}{S(j)} \approx \frac{w(k-1)a_{k-1}}{k-(k-1)\gamma}n^{k - (k-1)\gamma}.$$
Then, by the law of the iterated logarithm \eqref{fisher}, we have a.s.
$$ Q_k(n) = O\Big( \big(n^{k - (k-1)\gamma}\log\log n^{k - (k-1)\gamma}\big)^{1/2}\Big).$$  
Therefore, a.s., we obtain
$$
\lim_{n\rightarrow \infty}\frac{1}{n^{k-(k-1)\gamma}}Q_k(n) = 0 \ \ {\rm and \ \ } \lim_{n\rightarrow\infty} \frac{1}{n^{k - (k-1)\gamma}}\Phi_k(n) \ = \ \frac{w(k-1)a_{k-1}}{k - (k-1)\gamma}.$$

To update to $M_k(n)$ and $Z_k(n)$, we observe, by \eqref{quad_comp},
that the quadratic variation of $M_k(n)$ is of order a.s.
\begin{align}
\label{quadvar}
\langle M_k\rangle(n) &\lesssim \sum_{j=1}^{n-1} \frac{w(k-1)Z_{k-1}(j)}{S(j)} + \sum_{j=1}^{n-1} \frac{w(k)Z_k(j)}{S(j)}\\
&\lesssim C n^{k -(k-1)\gamma} +\sum_{j=1}^{n-1}\frac{\Phi_k(j)}{S(j)} \nonumber \\
&\lesssim Cn^{k-(k-1)\gamma} + C n^{k+1 - k\gamma}. \nonumber
\end{align}
Note that $k+1 - k\gamma <k - (k-1)\gamma$.  Hence, by the law of the iterated logarithm \eqref{fisher}, we have $M_k(n)/n^{k - (k-1)\gamma} \rightarrow 0$ a.s. Moreover,
\begin{eqnarray*}
Z_k(n) &=& \sum_{j=1}^{n-1} \frac{w(k-1)Z_{k-1}(j)}{S(j)} - \frac{w(k)Z_k(j)}{S(j)} + M_k(n)\\
&=&\sum_{j=1}^{n-1} \frac{w(k-1)Z_{k-1}(j)}{S(j)} + o(n^{k-(k-1)\gamma}).
\end{eqnarray*}
Therefore, we have a.s.
$$\lim_{n\rightarrow\infty} \frac{Z_k(n)}{n^{k - (k-1)\gamma}} = \frac{w(k-1)a_{k-1}}{k-(k-1)\gamma} = a_k,$$
concluding the proof of \eqref{mean_limit}.

To show the second item, when $\frac{\gamma}{\gamma -1} = k\geq 2$ is an integer, that is
a.s.,
\begin{equation*}
\label{log_limit}
\lim_{n\rightarrow\infty} \frac{1}{\log(n)}M_k(n) = 0 \ \ {\rm and \ \ } \lim_{n\rightarrow\infty}\frac{1}{\log(n)}Z_k(n) \ = \ b_k,
\end{equation*}
the argument is quite similar, following closely the steps above and is omitted. 

We now consider the third item, when $k>\gamma/(\gamma-1)$.  Let $r = \lfloor \gamma/(\gamma-1)\rfloor +1$ be the smallest integer strictly larger than $\gamma/(\gamma-1)$.  We now show that $\Phi_r(n)$ converges a.s.  Then, we would conclude, for $k>\gamma/(\gamma-1)$, as $\Phi_k(n)\leq \Phi_r(n)$ and $\Phi_k(n)$ is nondecreasing, that $\Phi_k(n)$ converges a.s.

Note $r-1\leq \gamma/(\gamma-1)$ and $\Phi_r(n) = \sum_{j=1}^{n-1} \frac{w(r-1)Z_{r-1}(j)}{S(j)} + Q_r(n)$.  In particular, by the previous items, $Z_{r-1}(j)/S(j)\approx j^{(r-1)(1-\gamma)}$ is summable as $(r-1)(\gamma-1)>1$.  Hence, $Q_r(n)$, being a bounded difference martingale, also must converge by \eqref{bounded_diff}, as its liminf is not $-\infty$ since $\Phi_r(n)\geq 0$.  Hence, $\Phi_r(n)$, being nondecreasing, converges a.s.  

Finally, the second part (2) follows from directly from Propositions \ref{frozen} and \ref{S_asymptotics}. 
\qed

\subsection{Refined asymptotics and fluctuations for $Z_k(n)$}
\label{sec:fine}

We now prove Theorem \ref{mainthm2} and Corollary \ref{mainthm3}, which give a description of the lower order nonrandom terms as well as the fluctuations in the counts $Z_k(n)$.
 
\medskip
\noindent
{\bf Proof of Theorem \ref{mainthm2}.}  
We consider first the part when $2\leq k< \gamma/(\gamma -1)$.  Recall that $A = \lfloor \gamma/(\gamma-1)\rfloor$, and the initial decomposition of $Z_k(n)$,
\begin{equation}
\label{Z_k_eq}
Z_k(n) = \sum_{j=1}^{n-1} \frac{w(k-1)Z_{k-1}(j)}{S(j)} - \frac{w(k)Z_k(j)}{S(j)} + M_k(n).
\end{equation}
By \eqref{conservation} and by Theorem \ref{degree_thm} that counts of vertices with degrees larger than $A$ are bounded, we have $n+1 = \sum_{k\geq 1} Z_k(n) = Z_1(n) + \cdots + Z_A(k) + O(1)$, and hence
\begin{equation*}
Z_1(n) = n - Z_2(n) - \cdots - Z_A(n) + O(1).
\end{equation*}
Also, by the remarks below \eqref{durrett_comment}, by Theorem \ref{degree_thm} again, and by Proposition \ref{frozen} that the maximum is achieved at a fixed vertex, we have, for all large $n$, that
$n-1=\sum_{k=2}^n (k-1)Z_k(n) = V(n) + \sum_{k=2}^A (k-1)Z_k(n) + O(1)$, and so $V(j) = j - \sum_{k=2}^A (k-1)Z_k(j) + O(1)$.  Hence, 
\begin{eqnarray*}
S(j) &= & V(j)^\gamma + \sum_{k=1}^A w(k)Z_k(j) + O(1)\\
&=& \big(j-\sum_{k=2}^A (k-1)Z_k(j)\big)^\gamma + \sum_{k=1}^A  w(k)Z_k(j) + O(1).
\end{eqnarray*}

One may now further decompose $Z_{k-1}(j)$, $Z_k(j)$ and $S(j)$, and successively decompose in turn the resulting counts.  Recall that, when $\gamma/(\gamma-1)$ is an integer, by Theorem \ref{degree_thm}, $Z_A(n) = O(\log(n))$.  Recall the leading orders of $\{Z_k(n)\}_{k=1}^A$ in Theorem \ref{degree_thm}.  Recall, also, the martingale $M_k(n)$, when $2\leq k<\gamma/(\gamma-1)$, is of order $\big (n^{(k - (k-1)\gamma}\log\log n^{k-(k-1)\gamma}\big)^{1/2}$, using \eqref{quadvar} and \eqref{fisher}.  Similarly, when $A=\gamma/(\gamma-1)$, the quadratic variation of $M_A(n)$ is of order $\log(n)$ and so $M_A(n)$ is of order $(\log(n)\log\log \log(n))^{1/2}$.

Any term, involving martingales, arising after the initial decomposition is of lesser order than $M_k(n)$ and may be omitted.  Indeed, the largest term arising from decomposing $S(j)$ in the sum in \eqref{Z_k_eq} is $\sum_{j=1}^{n-1} \frac{w(k-1)Z_{k-1}(j) (\gamma j^{-1}M_2(j))}{j^\gamma}$, which is of order
$n^{k - (k-1)\gamma - \gamma/2}\sqrt{\log\log n^{2-\gamma}}$.  On the other hand, the largest term arising from decomposing the numerators in the sum in \eqref{Z_k_eq} is
$\sum_{j=1}^{n-1}\frac{w(k-1)M_{k-1}(j)}{S(j)}$, which is of order 
$n^{(k-1 - (k-2)\gamma)/2 +1-\gamma}\sqrt{\log\log(n^{(k-1 - (k-2)\gamma)/2}} )$.  Given that $\gamma>1$, both orders are smaller than $n^{(k-(k-1)\gamma)/2}$.

Moreover, following these martingale calculations, any term $\eta(n)$ of order less than $n^{(k-1 - (k-2)\gamma)/2}$ in the decomposition of $Z_{k-1}(n)$ will contribute a term in the computation of $Z_k(n)$ of order less than $n^{(k - (k-1)\gamma)/2}$. 
Also, any term $\zeta_\ell(n)$ in the decompositions of $Z_\ell(n)$, with respect to $S(n)$, of order less than $n^{(2 - \gamma)/2}$ will give a term of order less than  $n^{(k - (k-1)\gamma)/2}$ in computing $Z_k(n)$.

After a finite number of iterative decompositions, we may decompose $Z_k(n)$ as the sum of a finite number of nonrandom diverging terms of orders larger or equal to $n^{(k-(k-1)\gamma)/2}$, the martingale $M_k(n)$, and a finite number of other (possibly random) terms whose order is less than $n^{(k - (k-1)\gamma)/2}$.

 By Theorem \ref{degree_thm}, as the dominant order of $Z_\ell(j)$ is $n^{\ell - (\ell-1)\gamma}$, when $\ell<\gamma/(\gamma-1)$, and $Z_A(j) = O(\log(j))$ when $A=\gamma/(\gamma-1)$, the nonrandom divergent orders, larger or equal to $n^{(k-(k-1)\gamma)/2}$, which appear in the decomposition of $Z_k(n)$ are all of the form $n^{m - (m-1)\gamma}$ where $k\leq m\leq k^*$, and $k^*$ is the largest integer so that $k^* - (k^*-1)\gamma \geq (k - (k-1)\gamma)/2$.   
 
By these decompositions, the coefficients $\{c^k_{k+\ell}\}_{\ell=0}^{k^*-k}$ are found from matching the powers of $n$ in the conditions
\begin{equation}\label{c_decomp}
Y_k(n) - \sum_{j=1}^{n-1} \frac{w(k-1)Y_{k-1}(j) -w(k)Y_k(j)}{T(j)} = o(n^{(k-(k-1)\gamma)/2})
\end{equation}
where, for $2\leq k<\gamma/(\gamma-1)$, 
$$Y_k(n) = c^k_kn^{k-(k-1)\gamma} + \cdots + c^k_{k^*} n^{k^*-(k^*-1)\gamma},$$ 
$Y_1(n) = n - \sum_{k=2}^AY_k(n)$, and 
$$T(n) = (n-\sum_{k=2}^A(k-1)Y_k(n))^\gamma +
\sum_{k=1}^A w(k)Y_k(n).$$ 

More precisely, the procedure is to sequentially compute, for each $\ell \geq 0$ fixed, the values $c^k_{k+\ell}$ as $k$ runs through $2\leq k< \gamma/(\gamma-1)$, with the provision that $k+\ell\leq k^*$.

In section \ref{sec:algebra}, we reprise this procedure to derive the coefficients, and work through an illustrative example.

To show the central limit theorem, when $2\leq k<\gamma/(\gamma-1)$, we need to show that 
$$\frac{M_k(n)}{n^{(k-(k-1)\gamma)/2}} \Rightarrow N(0, a_k).$$
  Let $\xi^{(n)}_j = n^{-(k-(k-1)\gamma)/2}\big\{d_k(j+1) - E[d_k(j+1)|\mathcal{F}_j]\}$, for $1\leq j\leq n-1$, define increments of the martingale array $\{M_k(j)/n^{(k-(k-1)\gamma)/2}: 1\leq j\leq n-1, n\geq 2\}$ with respect to sigma-fields $\{\mathcal{F}^{(n)}_j = \mathcal{F}_j: 1\leq j\leq n-1, n\geq 2\}$.  
  
  Note that, a.s.,
  $$\max_{1\leq j\leq n-1}|\xi^{(n)}_j|\leq n^{-(k-(k-1)\gamma)/2} \rightarrow 0.$$
    Also, the conditional variance, 
$$\sum_{j=1}^{n-1}E \big[(\xi^{(n)}_j)^2|\mathcal{F}_j\big] = \frac{1}{n^{k-(k-1)\gamma}}\langle M_k\rangle(n).$$
Since $S(j)\approx j^\gamma$ (Proposition \ref{S_asymptotics}) and $Z_\ell(j)\approx a_\ell j^{\ell - (\ell-1)\gamma}$ when $1\leq \ell \leq k$ (Theorem \ref{degree_thm}),
noting the quadratic variation formulas \eqref{quad_comp}, the conditional variance is of form $n^{-(k-(k-1)\gamma)}\sum_{j=1}^{n-1}\frac{w(k-1)Z_{k-1}(j)}{S(j)} + o(1)$, which converges a.s. to $w(k-1)a_{k-1}/(k-(k-1)\gamma) = a_k$. 

 Hence, when $2\leq k<\gamma/(\gamma-1)$, the standard Lindeberg assumptions for the martingale central limit theorem, Corollary 3.1 in \cite{HallHeyde}, are clearly satisfied, and the desired Normal convergence holds.  
 
 \medskip
When $2\leq k= \gamma/(\gamma -1)$ is an integer, the theorem statement follows from an easier version of the same argument.  Indeed, after initial decompositions, noting that $j^{-\gamma}Z_{k-1}(j) = j^{(k-1)(1-\gamma)} = j^{-1}$ by Theorem \ref{degree_thm}, 
\begin{eqnarray*}
Z_k(n) &=& \sum_{j=1}^{n-1} \frac{w(k-1)Z_{k-1}(j)}{S(j)} + M_k(n)\\
& =& b_k\log(n) + O(1) + M_k(n).
\end{eqnarray*}
  The quadratic variation of $(\log(n))^{-1/2}M_k(n)$, inspecting formulas \eqref{quad_comp}, converges a.s. to $b_k$.  
  The Normal convergence follows now as above.  

\medskip
When $\gamma/(\gamma-1)<2$, that is when $\gamma>2$, 
$$n - Z_1(n) = -2+\sum_{j=1}^{n-1} \frac{w(1)Z_1(j)}{S(j)} - M_1(n).$$
The sum converges as the summand is of order $n^{1-\gamma}$.  Since $n-Z_1(n)\geq 0$, the bounded difference martingale $M_1(n)$ must also converge, as its liminf cannot be $-\infty$ (cf. \eqref{bounded_diff}). Hence, $Z_1(n)-n$ converges a.s. 

Similarly, note (i) the remarks below \eqref{durrett_comment}, (ii) the maximum $V(n)$ is achieved at a fixed (random) vertex $v$ and other vertices have bounded degree (Proposition \ref{frozen}), and (iii) $\Phi_{A+1}(n)$ converges (Theorem \ref{degree_thm}) and so the set of corresponding vertices stabilizes after a random time to a finite set $\hat\Phi_{A+1}(\infty)$.  Then, for all large $n$, a.s.
\begin{equation*}
n-V(n)  = -1  + \sum_{k=2}^A(k-1)Z_k(n) \ + \text{weight  of  vertices in }\hat\Phi_{A+1}(\infty)\setminus\{v\}.
 \end{equation*}
 By Theorem \ref{degree_thm}, as $\gamma/(\gamma-1)<2$, the sum in the above display converges a.s. as $n\uparrow\infty$.  Hence, $V(n)-n$ converges a.s.
  \qed

\medskip
\noindent
{\bf Proof of Corollary \ref{mainthm3}.}   
As in the proof of Theorem~\ref{mainthm2}, we recast \eqref{conservation} as
\begin{align*}
Z_1(n) & = n - Z_2(n) - Z_3(n) - \cdots -Z_A(n) +O(1)\\
V(n) & = n - Z_2(n) - 2 Z_3(n) -\cdots -(A-1) Z_A(n) + O(1).  
\end{align*}
We now argue in the setting when $\gamma/(\gamma-1)>2$, as the case $\gamma/(\gamma-1)=2$ is similar and easier.

Since the order of the conditional variance of $M_2(n)$ is $n^{1-\gamma/2}$, and the martingales $M_k(n)$ are of lower order for $k>2$, 
we have 
\begin{align}
\label{b_decomp}
& \lim_{n \uparrow \infty} \frac{1}{n^{1-\gamma/2}}\Big(Z_1(n) + M_2(n) - n +   \sum_{k=2}^{2^*} \sum_{\ell = k}^{2^*} c_\ell^kn^{\ell - (\ell-1)\gamma}\Big)    = 0 \ \ a.s. \\
& \lim_{n \uparrow \infty} \frac{1}{n^{1-\gamma/2}}\Big(V(n) + M_2(n) - n + \sum_{k=2}^{2^*} \sum_{\ell = k}^{2^*} (k-1)c_\ell^k n^{\ell - (\ell-1)\gamma}\Big)    = 0 \ \ a.s. \nonumber
\end{align}
and Corollary~\ref{mainthm3} follows with $c^1_\ell =  \sum_{k=2}^{2^*} c^k_\ell$ and $m_\ell = \sum_{k=2}^{2^*} (k-1)c^k_\ell$.  \qed

\section{Example}
\label{sec:algebra}

We first recall the procedure that yields asymptotic expansions for the solutions of~\eqref{c_decomp}
when $\frac{\gamma}{\gamma-1} \geq 2$, in other words, $1 < \gamma \leq 2$.  Recall also that $A = \lfloor \frac{\gamma}{\gamma-1}\rfloor$.  

Define gauge sequences (or asymptotic scales) $\varphi_\ell(j)$, for  $0 \leq \ell < A$, by
$$
\varphi_\ell(j) = \begin{cases} j^{\ell +1- \ell \gamma} & 0 \leq \ell < \frac{1}{\gamma-1} \\
\log(j) & \ell = A-1 = \frac{1}{\gamma-1}. \end{cases}
$$
Note that, $\varphi_\ell(j)$ is diverging for $l=0,1,2,\ldots,A-1$, $\lim_{j \uparrow \infty} \frac{\varphi_{\ell+1}(j)}{\varphi_\ell(j)} = 0$.  We get $\phi_{A-1}(j) = \log(j)$ only if $\frac{\gamma}{\gamma -1}$ is an integer. It is easy to verify that
$$
\sum_{j=1}^{n-1} \frac{\varphi_{\ell-1}}{j^\gamma} \approx \begin{cases} (\ell+1-\ell \gamma)^{-1} \varphi_{\ell}(n) + O(1) & 1 \leq \ell < \frac{1}{\gamma-1} \\ \varphi_{A-1} + O(1) & \ell = \frac{1}{\gamma-1} \\  O(1) & \ell = A. \end{cases}
$$
Consequently, substituting asymptotic expansions for $Y_k$ up to $O(\varphi_{\ell-1})$ in the right hand side of \eqref{c_decomp} will generate the corresponding asymptotic expansions up to $O(\varphi_{\ell})$ on the  left hand side, and we can iterate this procedure to find the complete asymptotic expansions of the solutions of~\eqref{c_decomp}.

In expanding 
\eqref{c_decomp}, we need only retain the diverging, non-random terms, that are on scales equal to  or larger than the fluctuations given by the martingales $M_k$, to obtain the expansions in Theorem~\ref{mainthm2} and Corollary~\ref{mainthm3}. The quadratic variation computation in \eqref{quadvar} yields
$$
\langle M_k \rangle (n) =O\left(\varphi_{k-1}(n)\right),
$$
so that by \eqref{fisher}, we need to retain terms up to $O(\varphi_{k^*-1})$, where
$$
k^* = \left\lfloor \frac{\gamma}{2(\gamma-1)} + \frac{k}{2} \right\rfloor =  \max\Big\{\ell  \ \Big | \ \lim_{j \uparrow \infty} \frac{\sqrt{\varphi_{k-1}(j)}}{\varphi_{\ell-1}(j)} < \infty \Big\} \ \ \mbox{ for } k=2,3,\ldots,A.
$$ 
We remark, in the case $k=\gamma/(\gamma-1)$ is the integer $A$, that $k^* =k$.

For $\ell= 0,1,2,\ldots$, sequentially compute $c^k_{k+\ell}$ as $k$ runs through $2\leq k\leq A$, with the provision that $k+\ell\leq k^*$.  If we arrange the coefficients $c^k_j$ in a rectangular array with row index $k$ and column index $j$, this procedure corresponds to first computing the main diagonal, then the first super-diagonal, and then the second and so on.

\medskip

We now illustrate this procedure by considering the explicit case $\gamma = 5/4$. This corresponds to when $\gamma/(\gamma-1) = 5$, an integer. The corresponding gauge sequences are $\varphi_0(j) = j, \varphi_1(j) = j^{3/4}, \varphi_2(j) = j^{1/2}, \varphi_3(j) = j^{1/4}, \varphi_4(j) = \log(j)$.
Retaining terms up to the order of the square root of $\varphi_{k-1}$, for $2\leq k \leq 5$, we posit expansions,
\begin{align*}
Y_2(j) & = c_2^2 j^{3/4} + c_3^2 j^{1/2} + o(j^{1/2}),\\
Y_3(j) & = c_3^3  j^{1/2} + c_4^3 j^{1/4} +  o(j^{1/4}), \\
Y_4(j) & = c_4^4 j^{1/4} + o(j^{1/4}), \\
Y_5(j) & = c_5^5 \log(j)  + o(\log(j)).
\end{align*}
Although we know $c_k^k = a(k)$, from the asymptotics in Theorem~\ref{degree_thm}, we will however compute them as part of the procedure as a check on the method.

For $\ell =0, k = 2$, we have $Y_2(j) = c_2^2 j^{3/4} + o(j^{3/4})$, $Y_3(j) = O(j^{1/2})$, $Y_4(j) = O(j^{1/4})$, $Y_5(j) = O(\log(j))$, so that $Y_1 = j -   c_2^2 j^{3/4} +  O(j^{1/2})$ and $T(j) = (j- c_2^2 j^{3/4} + O(j^{1/2}))^\gamma$.

Equation \eqref{c_decomp} now yields
\begin{align*}
c_2^2 n^{3/4} + o(n^{3/4}) & = \sum_{j=1}^{n-1} \frac{j - (1+2^\gamma) c_2^2 j^{3/4} + O(j^{1/2})}{(j - c_2^2 j^{3/4} + O(j^{1/2}))^{5/4} + j + (2^\gamma-1) c_2^2 j^{3/4} + O(j^{1/2})} \\
& = \frac{4}{3} n^{3/4} + o(n^{3/4}) 
\end{align*}
where we obtain the second line by expanding the summand as a series in $j$ about $j = \infty$. Consequently $c_2^2 = \frac{4}{3} = \frac{w(1)}{2-\gamma}$ in agreement with Theorem~\ref{degree_thm}. 

Applying the same method, sequentially, with $k=3,4$ and $5$ yields
 \begin{align*}
&c_3^3 n^{1/2} + o(n^{1/2})  = \sum_{j=1}^{n-1} \frac{\frac{4}{3} 2^\gamma j^{3/4} + O(j^{1/2})}{(j - c_2^2 j^{3/4} + O(j^{1/2}))^{5/4} + j + (2^\gamma-1) c_2^2 j^{3/4} + O(j^{1/2})} \\
&\ \ \ \ \ \ \ \ \ \ \ \ \ \ \ \ \ \ \ \ \ = \frac{8}{3} 2 ^\gamma n^{1/2} + o(n^{1/2}) \\
&c_4^4 n^{1/4} + o(n^{1/4})  = \sum_{j=1}^{n-1} \frac{\frac{8}{3} 2^\gamma 3^\gamma  j^{1/2} + O(j^{1/4})}{(j - c_2^2 j^{3/4} + O(j^{1/2}))^{5/4} + j + (2^\gamma-1) c_2^2 j^{3/4} + O(j^{1/2})} \\
&\ \ \ \ \ \ \ \ \ \ \ \ \ \ \ \ \ \ \ \ \ = \frac{32}{3} 6^\gamma n^{1/4} + o(n^{1/4}) 
\end{align*}
\begin{align*}
& c_5^5 \log(n) + o(\log(n)) \\
&\ \ \ \ \ \ \ \  = \sum_{j=1}^{n-1} \frac{\frac{32}{3} 24^\gamma  j^{1/4} + o(j^{1/4})}{(j - c_2^2 j^{3/4} + O(j^{1/2}))^{5/4} + j + (2^\gamma-1) c_2^2 j^{3/4} + O(j^{1/2})} \\
&\ \ \ \ \ \ \ \ = \frac{32}{3} 24^\gamma \log(n) + o(\log(n)) 
\end{align*} 
These values for $c_3^3,c_4^4$ and $c_5^5$ also agree with the conclusions of Theorem~\ref{degree_thm}, namely,
$$
c_3^3 = a_3= \frac{w(2)w(1)}{(2-\gamma)(3-2\gamma)}, \quad c_4^4 = a_4= \frac{w(3)w(2)w(1)}{(2-\gamma)(3-2\gamma)(4-3\gamma)}, 
$$ 
and $c_5^5 = b_5 = w(4) a_4$. 

Using these values, we have the refined expansions:
\begin{align*}
Y_2(j) & = \frac{4}{3} j^{3/4} + c_3^2 j^{1/2} + o(j^{1/2}), \\
Y_3(j) & = \frac{8}{3} 2^\gamma j^{1/2} + c_4^3 j^{1/4} + o(j^{1/4}) \\
Y_4(j) & = \frac{32}{3} 6^\gamma j^{1/4} + o(j^{1/4}), \\
Y_5(j) & = \frac{32}{3} 24^\gamma \log(j) + o(\log(j)).
\end{align*}
Now, we set $\ell = 1$ and we again iterate over $k$. 
For $k = 2$, \eqref{c_decomp} yields
\begin{align*}
\frac{4}{3} n^{3/4} + c_3^2 n^{1/2} + o(n^{1/2}) & = \sum_{j=1}^{n-1} \frac{j - \frac{4}{3}(1+2^\gamma) j^{3/4} +O(j^{1/2})}{(j - \frac{4}{3} j^{3/4} +O(j^{1/2}))^{5/4} + j + O(j^{3/4})} \\
& = \frac{4}{3} n^{3/4} - \left(\frac{4}{3} - \frac{8}{3}2^\gamma\right) n^{1/2} + O(n^{1/4}) 
\end{align*}
showing that $c^2_3 = - \left(\frac{4}{3} - \frac{8}{3}2^\gamma\right)$ and all deterministic remainder terms in the expansion of $Y_2(j)$ are actually $O(j^{1/4})$. 
For $k = 3$, we get
\begin{align*}
\frac{8}{3} n^{1/2} + c_4^3 n^{1/4} + o(n^{1/4}) & = \sum_{j=1}^{n-1} \frac{c^2_3 2^\gamma j^{3/4} - \left(\frac{4}{3} 2^\gamma + \frac{8}{3} 6^\gamma\right)  j^{1/2} + o(j^{1/2})}{(j - \frac{4}{3} j^{3/4} +O(j^{1/2}))^{5/4} + j + O(j^{3/4})}  \\
& = \frac{8}{3}2^\gamma n^{1/2} -\frac{16}{9}  \left(2^\gamma +  6\cdot 4^\gamma +6\cdot 6^\gamma\right) n^{1/4} + O(\log(n)) \\
\implies \quad c_4^3 & = -\frac{16}{9} \left(2^\gamma +  6\cdot 4^\gamma +6\cdot 6^\gamma\right).
\end{align*}

Consequently, for $\gamma = \frac{5}{4}$, from Theorem~\ref{mainthm2}, Corollary~\ref{mainthm3} and~\eqref{b_decomp}, we have the central limit theorems
\begin{align}
&  \lim_{n \uparrow \infty} \frac{1}{n^{3/8}}  \begin{pmatrix}Z_1(n) - n + \frac{4}{3}n^{3/4}+\left(\frac{16}{3}2^\gamma- \frac{4}{3} \right)n^{1/2}\\ -Z_2(n) + \frac{4}{3}n^{3/4}+\left(\frac{8}{3}2^\gamma- \frac{4}{3} \right)n^{1/2}\\ V(n)-n + \frac{4}{3}n^{3/4}+\left(8\cdot2^\gamma- \frac{4}{3} \right)n^{1/2} \end{pmatrix} \Rightarrow N\left(\begin{pmatrix} 0 \\ 0 \\ 0\end{pmatrix}, \frac{4}{3}\begin{pmatrix} 1 & 1 & 1\\ 1 & 1 & 1 \\ 1 & 1 & 1\end{pmatrix} \right)  \nonumber \\
&\lim_{n \uparrow \infty} \frac{1}{n^{1/4}}\Big(Z_3(n)-\frac{8}{3}2^\gamma n^{1/2} +\frac{16}{9}  \left(2^\gamma +  6\cdot 4^\gamma +6\cdot 6^\gamma\right)  n^{1/4}\Big)  \Rightarrow N\left(0, \frac{8}{3}2^\gamma \right) \nonumber \\
&\lim_{n \uparrow \infty} \frac{1}{n^{1/8}}\Big(Z_4(n)-\frac{32}{3}6^\gamma n^{1/4} \Big)  \Rightarrow N\left(0, \frac{32}{3}6^\gamma \right) \nonumber \\
& \lim_{n \uparrow \infty} \frac{1}{\sqrt{\log(n)}}\Big(Z_5(n) - \frac{32}{3}24^\gamma \log(n)\Big)   \Rightarrow N\left(0, \frac{32}{3}24^\gamma \right).
\label{fluctuations}
\end{align}

 \vskip .2cm
\noindent {\bf Acknowledgement.}  This work was partially supported by ARO W911NF-14-1-0179.
\vskip .1cm

\end{document}